\documentclass[review,hidelinks,onefignum,onetabnum]{siamart220329}
\usepackage{listings}%
\usepackage{amsmath,amssymb}
\usepackage{enumitem}
\usepackage{bm}
\usepackage{cite}
\usepackage{float}

\newcommand{\p}{\partial}

\newcommand{\ds}{\displaystyle}

\usepackage{graphicx}
\usepackage{subfig}  
\usepackage{color}
\usepackage{lineno}

\newtheorem{remark}{Remark}[section]

\headers{Multiple solution computation of elliptic problems}{Lei Zhang, Xiangcheng Zheng, Shangqin Zhu}

\title{Numerical analysis of spatiotemporal high-index saddle dynamics for finding multiple solutions of semilinear elliptic problems \thanks{Submitted to... \funding{This work was partially supported by the National Natural Science Foundation of China (No. 12225102, T2321001, 12288101, 12301555), the Natural Science Foundation of Shandong Province (ZR2025QB01), the Taishan Scholars Program of Shandong Province (No. tsqn202306083), the National Key R\&D Program of China (No. 2023YFA1008903).}}
}

\author{Lei Zhang\thanks{Beijing International Center for Mathematical Research, Center for Machine Learning Research, Center for Quantitative Biology, Peking University, Beijing 100871, China,
  (\email{zhangl@math.pku.edu.cn}).}
\and Xiangcheng Zheng\thanks{School of Mathematics, State Key Laboratory of Cryptography and Digital Economy Security, Shandong University, Jinan, 250100, China, 
  (\email{xzheng@sdu.edu.cn}).}
\and Shangqin Zhu\thanks{School of Mathematics, Shandong University, Jinan, 250100, China, 
  (\email{sqzhu@mail.sdu.edu.cn}).}}

\usepackage{amsopn}


\externaldocument[][nocite]{ex_supplement}

\begin{document}
\nolinenumbers
\maketitle

\begin{abstract}
This paper presents a rigorous numerical framework for computing multiple solutions of semilinear elliptic problems by spatiotemporal high-index saddle dynamics (HiSD), which extends the traditional HiSD to the continuous-in-space setting, explicitly incorporating spatial differential operators.
To enforce the Stiefel manifold constraint without introducing the analytical complications of retraction-based updates, we design a fully discrete retraction-free orthonormality-preserving scheme for spatiotemporal  HiSD. This scheme exhibits favorable structural properties that substantially reduce the difficulties arising from coupling and gradient nonlinearities in spatiotemporal HiSD. 
Exploiting these properties, we establish gradient stability and error estimates, which consequently ensure the preservation of the Morse index for the computed saddle points. The framework is further extended to the semilinear advection-reaction-diffusion equation. 
Numerical experiments demonstrate the efficiency of the proposed method in finding multiple solutions and constructing the solution landscape of semilinear elliptic problems.
To the best of our knowledge, this work presents the first rigorous full space--time accuracy analysis of the HiSD system. It reveals intrinsic connections between saddle-search algorithms and numerical methods for PDEs, enhancing their mutual compatibility for a broad range of problems.
\end{abstract}

\begin{keywords}
multiple solutions, saddle point, transition state, high-index saddle dynamics, semilinear elliptic equation, error estimate
\end{keywords}

\begin{MSCcodes}
65M60, 65M12
\end{MSCcodes}

\section{Introduction}
\subsection{Background and challenges}
Computing multiple solutions of nonlinear partial differential equations (PDEs) is critical for elucidating multiple stationary states and transition pathways, with broad applications in scientific and engineering fields, cf. the review \cite{Zho}. A typical multiple solution problem is the semilinear elliptic equation given by
\begin{equation}\label{elliptic}
	 \nabla\cdot(\mathbf{a}(x)\nabla u(x))+ f(u(x))=0,~x\in \Omega;~~  u=0,~x\in \p\Omega.
\end{equation}
Here, $\Omega$ is a bounded convex  polygonal domain in $\mathbb{R}^d$ ($1\leq d\leq 3$) with boundary $\partial \Omega$, $f(\cdot)$ is a given function and $\mathbf{a}(x)\in\mathbb R^{d\times d}$ is a coefficient matrix.
Various sophisticated algorithms have been developed to locate multiple solutions, such as the surface-walking methods \cite{EZho,Lev,Qua,ZhaDu}, the deflation technique \cite{Farr}, the iterative minimization formulation \cite{Gao}, the homotopy continuation method \cite{Hao2, Mehta}, the minimax-type methods \cite{LiuXie,Xie}, etc.
In particular, the high-index saddle dynamics (HiSD) method \cite{YinSISC}  is an effective method for finding multiple solutions and a natural choice for constructing the solution landscape, which has been applied in various fields such as studying defect configurations in liquid crystals \cite{wang2021modeling}, morphologies of confined diblock copolymers \cite{Xu}, the nucleation of quasicrystals \cite{Yin2020nucleation}, and excited states in rotational Bose--Einstein condensates \cite{YinInno}.

Despite successful applications, the standard HiSD is inherently formulated as a dynamical system designed for finite-dimensional problems. When applying HiSD to infinite-dimensional problems such as (\ref{elliptic}), spatial discretization is typically applied first to reduce the system to a finite-dimensional approximation, which is then evolved using HiSD. Consequently, the discrete spatial differential operators become embedded within HiSD. This suggests that the dynamical system formulation of HiSD should be adapted to the continuous-in-space setting to rigorously accommodate infinite-dimensional problems such as PDEs. 

Addressing this issue, a recent study \cite{ZhaZheZhu} proposed a continuous-in-space saddle dynamics for locating index-1 saddle solutions (transition states) of (\ref{elliptic}), formulated as a coupled nonlinear parabolic system. Recall that the Morse index of a non-degenerate saddle point is defined by the number of eigenvalues of the Hessian at this point with negative real parts. The numerical computation for transition states for (\ref{elliptic}) can thus be realized by performing the spatiotemporal discretization on this parabolic system. This approach significantly improves the compatibility between PDE numerical methods and multiple solution computations by HiSD, thereby facilitating the establishment of a rigorous numerical theory for fully spatiotemporal discretization.

\subsection{Spatiotemporal HiSD}
In this paper, we extend the study in \cite{ZhaZheZhu} to high-index cases by proposing the following {\it spatiotemporal HiSD} for locating index-$k$ saddle points of (\ref{elliptic}): 
\begin{equation}\label{SD}
\left\{\begin{aligned}
 u_t&=\beta\Big( \nabla\cdot(\mathbf{a}\nabla u)+ f(u)- 2\sum_{i=1}^k v_i(v_i,\nabla\cdot(\mathbf{a}\nabla u)+ f(u))\Big),\\[-0.03in]
 (v_i)_t& =\gamma\Big(\nabla\cdot(\mathbf{a}\nabla v_i)+ f'(u)v_i -\sum_{j=1}^{k}v_j(v_j, \nabla\cdot(\mathbf{a}\nabla v_i)+ f'(u)v_i )\Big),
    \end{aligned}
    \right.
\end{equation}
for $1\leq i\leq k$ and for $x\in\Omega$ and $t>0$, equipped with initial values $u_0$ and $\{v_{i,0}\}_{i=1}^k$ and zero boundary conditions.

 Here, $\{v_i(x,t)\}_{i=1}^k$ denote auxiliary variables that support the evolution of the state variable $u(x,t)$,  $\beta ,\gamma >0$ are relaxation parameters, and $(\cdot,\cdot)$ represents the inner product of $L^2(\Omega)$. The formulation of (\ref{SD}) can be viewed as the continuous-in-space version of the saddle dynamics proposed in \cite{quapp1}. Notably, when $k=1$, (\ref{SD}) reduces to the index-1 saddle dynamics analyzed in \cite{ZhaZheZhu}.
  
A fundamental property of (\ref{SD}) is the preservation of orthonormality: if the initial set $\{v_{i,0}\}_{i=1}^k$ is orthonormal, then $\{v_{i}\}_{i=1}^k$ remains orthonormal for all $t\geq 0$. This property inherently dictates the  dynamics of $u$ and is critical for convergence to the target saddle points. To demonstrate the impact of orthonormality, we compare the convergence behavior of the proposed orthonormality-preserving scheme (\ref{app}) with a non-orthonormal variant, obtained by removing the correction terms $(v_{i, h}^{n-\frac{1}{2}}, v_{i, h}^{n-\frac{1}{2}})$  from (\ref{app}). We set $k = 3$ in both schemes to locate an index-3 saddle point, using the initial values $u_0=\sin x$ and $v_{i,0}=\sqrt{2/\pi}\sin (ix)$ ($1\leq i\leq 3 $). All other parameters follow Case (a) of Example 1 in Section \ref{sec7}. Numerical solutions of $u$ at different time instants are shown in Figure \ref{NPO}. The results indicate that the orthonormality-preserving scheme (\ref{app}) successfully converges to an index-3 saddle point, whereas the non-orthonormal scheme deviates to an index-2 saddle point. This example suggests the importance of preserving orthonormality within the numerical scheme.
 
 \vspace{-0.123in}
 
  \begin{figure}[h]
\centering 
\includegraphics[width=4.5in,height=1.5in]{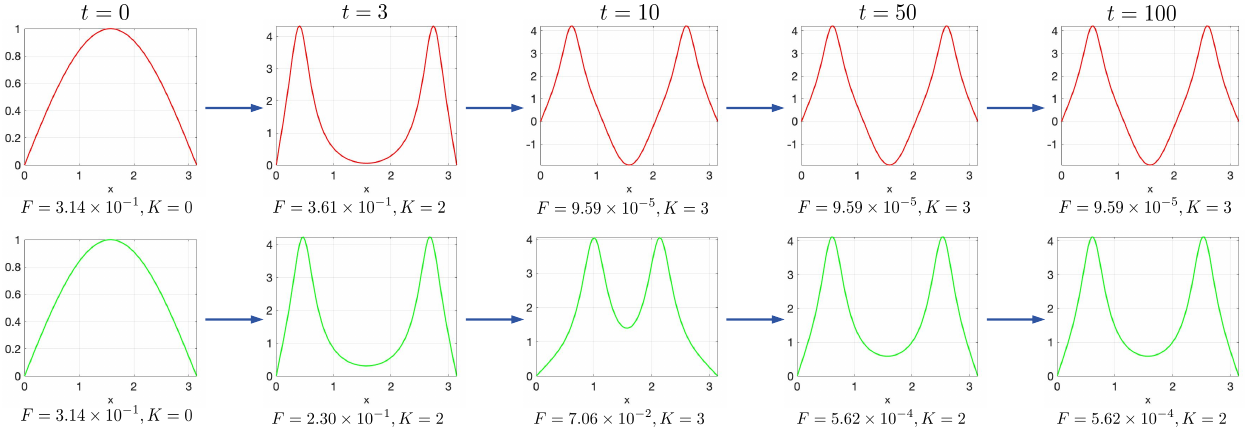} 
\caption{Numerical solutions of $u$ at different time instants for the orthonormality-preserving scheme (\ref{app}) (red) and its non-orthonormal variant (green). Here, $F:= \|F(u_h^n)\|_{l^\infty}$ (cf. Section \ref{sec7}) measures the degree of convergence  and $K$ denotes the number of negative eigenvalues of the Hessian at each state.}
\label{NPO} 
\end{figure}

\vspace{-0.401in}

\subsection{Contributions of the work} 
Existing analyses of HiSD schemes, such as \cite{Z3} and subsequent works, primarily focus on retraction schemes based on the Gram--Schmidt procedure. However, these studies typically rely on the Lipschitz condition of the problem, which is typically not applicable to PDEs. 
Recently, \cite{ZhaZheZhu} analyzed a retraction scheme for (\ref{SD}) in the index-1 case ($k=1$). Extending this analysis to high-index cases ($k>1$) presents significant challenges. The primary obstacle is that for $k>1$, the retraction step must enforce mutual orthogonality among auxiliary variables, introducing complex couplings between the retraction operator and the gradients of numerical solutions. Consequently, deriving gradient bounds and error estimates becomes exceedingly intricate. Moreover, the analysis in \cite{ZhaZheZhu} relies on an $L^\infty$ boundedness assumption for the numerical solution of $u$ to handle coupling and gradient nonlinearities; this is considered a restrictive assumption in standard numerical analysis.

To address these challenges, a retraction-free orthonormality-preserving scheme is proposed for the spatiotemporal HiSD (\ref{SD}).  While orthonormality is proved by induction, the coupled dynamics of $v_i$ in (\ref{SD}) do not constitute a gradient flow on the Stiefel manifold. Consequently, gradient bounds for $v_i$ cannot be derived from standard energy decay arguments. Fortunately, we find that the proposed scheme exhibits favorable structural properties that can significantly reduce the difficulties in handling the coupling and the gradient nonlinearity. Specifically, we employ the nonlinear structures of the scheme to establish gradient stability and error estimates, without imposing the $L^\infty$ boundedness of the numerical solutions required in \cite{ZhaZheZhu}.
 We further extend this framework to semilinear advection-reaction-diffusion equation. Our results validate the accuracy of multiple solution computations and the index-preservation property, providing a solid theoretical foundation for solution landscape investigations.

The rest of the paper is organized as follows. We introduce preliminary results in Section 2, and then develop the retraction-free orthonormality-preserving  scheme in Section 3. We bound gradients of the numerical solutions in Section 4, which is critical for error analysis in Section 5. The extension to semilinear advection-reaction-diffusion equation is considered in Section 6. Numerical experiments are performed in Section 7, and we address concluding remarks in the last section.

 \section{Preliminaries}
 Let $L^p(\Omega)$ and $W^{m,p}(\Omega)$ for $0\leq m\in\mathbb N$ and $1\leq p\leq \infty$ be standard Sobolev spaces, equipped with standard norms \cite{Ada}. In particular, we set $H^m(\Omega):=W^{m,2}(\Omega)$ and $H^m_0(\Omega)$ denotes the closure of $C^\infty_0(\Omega)$, the space of infinitely differentiable functions with compact support in $\Omega$,  with respect to the norm $\|\cdot\|_{H^m(\Omega)}$. For a Banach space $\mathcal X$ and some $T>0$, the Bochner space $L^p(0,T;\mathcal X)$ contain functions $g$ that are finite under the norm $\|g\|_{L^p(0,T;\mathcal X)}:=\|\|g\|_{\mathcal X}\|_{L^p(0,T)}$ \cite{Ada}.
Then the space $W^{m,p}(0, T;\mathcal X)$ contains functions that are finite under the  norm $\|u\|_{W^{m,p}(0, T;\mathcal X)}:= \sum_{k=0}^m  \|u^{(k)}_t\|_{L^p(0,T;\mathcal X)}$ \cite{Ada}. For simplicity, we denote $\|\cdot\|:=\|\cdot\|_{L^2(\Omega)}$ and $(g,\tilde g):=\int_\Omega g(x)\tilde g(x)dx$, and omit $\Omega$ in notations of spatial norms. Furthermore, we omit the space variable $x$ in functions, for instance, we denote $u(x,t)$ by $u(t)$. Finally, for a vector-valued function $\mathbf{w}(x)\in\mathbb R^d$, we define $\|\mathbf{w}\|:=\|\|\mathbf{w}\|_{l^2}\|$ where $l^2$ refers to the $l^2$-norm of a vector or a matrix.

In this work, we make the following standard assumptions on the coefficient matrix $\mathbf{a}$ and the nonlinear term $f$:

\textit{Assumption A}: The coefficient matrix $\mathbf{a}$ is symmetric, bounded and uniformly elliptic,  which means that $\mathbf{a}$ satisfies $\mathbf{a}=\mathbf{a}^\top$, each entry of $\mathbf{a}$ belongs to $W^{1,\infty}$ and there exists a constant $a_0>0$ such that $\mathbf{w}^\top \mathbf a\mathbf w\geq a_0|\mathbf w|^2$ for any $\mathbf w\in\mathbb R^d$. The $f$ and $f'$ are local Lipschitz functions on $\mathbb R$, that is, for any $r>0$ there exists a constant $L_r>0$ depending on $r$ such that 
$
|f(z_1)-f(z_2)|+|f'(z_1)-f'(z_2)|\leq L_r|z_1-z_2|$ for any $ -r\leq z_1, z_2\leq r$.

\begin{remark}
Note that the conditions on $\mathbf a$ are commonly-used assumptions in the literature of elliptic problems and, since $\mathbf a$ is symmetric and positive definite by assumption,  there exists a symmetric and positive definite matrix  $\sqrt{\mathbf{a}}$ such that $(\sqrt{\mathbf{a}})^2=\mathbf a$. The inverse of $\mathbf{a}$ denoted by $\mathbf{a}^{-1}$ is also symmetric and positive definite such that there exists a symmetric and positive definite matrix  $\sqrt{\mathbf{a}^{-1}}$ satisfying $(\sqrt{\mathbf{a}^{-1}})^2=\mathbf{a}^{-1}$. As the coercive coefficient of $\mathbf{a}$ is $a_0$, the eigenvalues of $\sqrt{\mathbf{a}^{-1}}$ have an upper bound $\frac{1}{\sqrt{a_0}}$ for any $x\in\Omega$ such that 
\begin{equation}\label{zjy15}
\|\sqrt{\mathbf{a}^{-1}}\mathbf{w}\|=\|\|\sqrt{\mathbf{a}^{-1}}\mathbf{w}\|_{l^2}\|  \leq \|\|\sqrt{\mathbf{a}^{-1}}\|_{l^2}\|\mathbf{w}\|_{l^2}\|\leq \frac{1}{\sqrt{a_0}}\|\mathbf w\|.
\end{equation}
\end{remark}

We use $Q$ to denote a generic positive constant that may assume different values at different occurrences.
All these constants may depend on $T$ and certain norms of solutions but are independent from parameters of numerical methods such as the time step size, the number of steps and the spatial mesh size. 

For a time-dependent matrix $\mathbf{A}(t)\in \mathbb{R}^{k\times k} $ with elements $\mathbf{A}_{ij}=a_{ij}(t)$, we define a norm as 
$
 	\|\mathbf{A}\|=\sum_{i,j=1}^k |a_{ij}|,
$
 which, by direct calculations, satisfies
$
 	\|\mathbf{A}\mathbf{B}\|\leq 	\|\mathbf{A}\| \|\mathbf{B}\|$ and $ \|\int _0^t \mathbf{A}(s) ds \| \leq \int _0^t \|\mathbf{A}(s) \|ds$ for any $\mathbf{A}, \mathbf{B}\in \mathbb{R}^{k\times k}$ and $t\geq 0$.

\begin{theorem}\label{orth}
If the initial values $\{v_{i,0}\}_{i=1}^k$ of (\ref{SD}) satisfy $(v_{i,0},v_{j,0})=\delta_{ij}$ for $ 1\leq i,j \leq k$, then $(v_{i},v_{j})=\delta_{ij}$ for $1\leq i,j \leq k$ and for any $t>0$.
\end{theorem}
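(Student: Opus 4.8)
The plan is to monitor the Gram matrix $\mathbf{A}(t)\in\mathbb{R}^{k\times k}$ with entries $\mathbf{A}_{ij}(t):=(v_i(t),v_j(t))$ and to show that it obeys a homogeneous linear matrix ODE, which forces $\mathbf{A}\equiv\mathbf{I}$ once $\mathbf{A}(0)=\mathbf{I}$. To organize the computation, abbreviate $g_i:=\nabla\cdot(\mathbf{a}\nabla v_i)+f'(u)v_i$, so that the second equation of (\ref{SD}) reads $(v_i)_t=\gamma\big(g_i-\sum_{l=1}^k v_l(v_l,g_i)\big)$. Differentiating the inner product and applying the product rule gives
\begin{equation*}
\frac{d}{dt}\mathbf{A}_{ij}=((v_i)_t,v_j)+(v_i,(v_j)_t)=\gamma\Big((v_j,g_i)+(v_i,g_j)-\sum_{l=1}^k(v_l,g_i)(v_l,v_j)-\sum_{l=1}^k(v_l,g_j)(v_i,v_l)\Big).
\end{equation*}
Each inner product here is finite: integrating by parts with the zero boundary conditions yields $(v_i,\nabla\cdot(\mathbf{a}\nabla v_j))=-(\nabla v_i,\mathbf{a}\nabla v_j)$, so the quantities involved are genuine $L^2$ pairings once $v_i\in H_0^1$.

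The heart of the argument is an algebraic collapse. Introducing the matrix $\mathbf{P}$ with entries $\mathbf{P}_{ij}:=(v_i,g_j)$, noting $(v_j,g_i)=\mathbf{P}_{ji}$, and using the symmetry $\mathbf{A}=\mathbf{A}^\top$ (immediate from the symmetry of the inner product), the two sums rewrite as $(\mathbf{A}\mathbf{P})_{ij}$ and $(\mathbf{A}\mathbf{P})_{ji}$, so that the display above becomes, in matrix form,
\begin{equation*}
\frac{d}{dt}\mathbf{A}=\gamma\big(\mathbf{P}+\mathbf{P}^\top-\mathbf{A}\mathbf{P}-\mathbf{P}^\top\mathbf{A}\big)=\gamma\big((\mathbf{I}-\mathbf{A})\mathbf{P}+\mathbf{P}^\top(\mathbf{I}-\mathbf{A})\big).
\end{equation*}
This is precisely where the correction terms $\sum_l v_l(v_l,g_i)$ do their work: they are constructed so that the deviation $\mathbf{E}:=\mathbf{A}-\mathbf{I}$ factors out of every term, giving the closed homogeneous system $\tfrac{d}{dt}\mathbf{E}=-\gamma(\mathbf{E}\mathbf{P}+\mathbf{P}^\top\mathbf{E})$ with $\mathbf{E}(0)=\mathbf{0}$.

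It then remains to conclude by a Gronwall estimate. Integrating in time and invoking the matrix-norm inequalities recalled above, namely $\|\mathbf{A}\mathbf{B}\|\leq\|\mathbf{A}\|\|\mathbf{B}\|$ and $\|\int_0^t\mathbf{A}(s)\,ds\|\leq\int_0^t\|\mathbf{A}(s)\|\,ds$, one obtains $\|\mathbf{E}(t)\|\leq 2\gamma\int_0^t\|\mathbf{P}(s)\|\,\|\mathbf{E}(s)\|\,ds$; since $\|\mathbf{E}(0)\|=0$, Gronwall's inequality yields $\mathbf{E}\equiv\mathbf{0}$, i.e. $(v_i,v_j)=\delta_{ij}$ for all $t>0$. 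I expect the only genuinely delicate point to be ensuring that $\|\mathbf{P}\|$ is integrable on $[0,T]$ so that Gronwall applies: after the integration by parts one has $\mathbf{P}_{ij}=-(\nabla v_i,\mathbf{a}\nabla v_j)+(v_i,f'(u)v_j)$, and controlling this requires the regularity of $v_i$ together with the boundedness of $f'(u)$ supplied by Assumption A and the a priori solution estimates. The algebraic reduction to $(\mathbf{I}-\mathbf{A})\mathbf{P}+\mathbf{P}^\top(\mathbf{I}-\mathbf{A})$ is the structural observation that makes the whole argument work, whereas the Gronwall step is routine.
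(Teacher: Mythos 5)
Your proposal is correct and follows essentially the same route as the paper: your deviation matrix $\mathbf{E}=\mathbf{A}-\mathbf{I}$ and coefficient matrix $\mathbf{P}$ are exactly the paper's $\mathbf{H}$ and $\mathbf{M}$, and both arguments derive the homogeneous linear matrix ODE $\tfrac{d}{dt}\mathbf{E}=-\gamma(\mathbf{E}\mathbf{P}+\mathbf{P}^\top\mathbf{E})$ with $\mathbf{E}(0)=\mathbf{0}$ and conclude by integrating and applying Gronwall with the stated matrix norm. The only cosmetic difference is that the paper additionally notes $\mathbf{M}$ is symmetric (which follows from integration by parts and $\mathbf{a}=\mathbf{a}^\top$, as you observe), whereas your version keeps $\mathbf{P}^\top$ explicit and so does not need that fact.
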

\begin{proof}
Define  symmetric matrices $\mathbf{M}(t)\in\mathbb{R}^{k\times k}$  with elements $m_{ij}:=(v_i,\nabla\cdot(\mathbf{a}\nabla v_j)+f'(u)v_j)$ for $1\leq i,j \leq k$ and $\mathbf{H}(t)\in\mathbb{R}^{k\times k}$ with elements $ h_{ij}:=(v_i,v_j)-\delta_{ij}$ for $1\leq i,j \leq k$. According to (\ref{SD}), we have
\begin{equation*}
\begin{aligned}
&((v_i,v_i)-1)_t=2(v_{i,t},v_i)=2\gamma \Big[ (\nabla\cdot(\mathbf{a}\nabla v_i)+ f'(u)v_i, v_i)\\
&~~-\sum_{l=1}^{k}(v_l, v_i)(v_l, \nabla\cdot(\mathbf{a}\nabla v_i)+ f'(u)v_i) \Big] =-2\gamma \sum_{l=1}^{k}m_{il}h_{li}=-\gamma (\mathbf{M}\mathbf{H}+\mathbf{H}\mathbf{M})_{ii},
\end{aligned}
\end{equation*}
and, similarly, 
$(v_i, v_j)_t=(v_{i, t},v_j)+ (v_i, v_{j,t})=-\gamma (\mathbf{M}\mathbf{H}+\mathbf{H}\mathbf{M})_{ij}$.
Thus we obtain a matrix equation
$
	\frac{d\mathbf{H}(t)}{dt}+\gamma (\mathbf{M}(t)\mathbf{H}(t)+\mathbf{H}(t)\mathbf{M}(t))=0$ with
 $\mathbf{H}(0)=0$. Integrate both sides of this equation  with respect to $t$ and apply the matrix norm $\|\cdot\|$ to obtain
$
\|\mathbf{H}(t)\|\leq 2\gamma \int_0^t \|\mathbf{M}(s)\|\cdot \|\mathbf{H}(s)\| ds.
$
An application of the Gronwall inequality  completes the proof.
\end{proof}

\section{Scheme and orthonormality preservation}
We propose a retraction-free orthonormality-preserving scheme for (\ref{SD}) on a finite time interval $[0,T]$. It will be shown in Corollary \ref{cor1} that the numerical solution at $T$ could approximate the target saddle point solution with an arbitrarily small error for $T$ large enough, which justifies the sufficiency of restricting $t\in [0,T]$.
 
\subsection{Numerical scheme}
Define a quasi-uniform partition of $\Omega$ with mesh size $h$, and let $S_h$
be the space of continuous and piecewise linear functions on $\Omega$ with respect to the
partition. The elliptic projection operator $P:  H^1_0\rightarrow S_{h}$ defined by
$\big(\mathbf{a}\nabla(g- P g), \nabla \chi \big)=0$ for any $\chi\in S_h$
  satisfies the following estimate \cite{Bre5}
\begin{equation}\label{eta1}
	\|g- P g\|+h\|\nabla(g-P g)\|\leq Qh^2\|g\|_{H^2} \text{ for any } g\in H^2\cap H^1_0. 
\end{equation}
To obtain the weak formulation of the problem, we compute the inner product of the equations in (\ref{SD}) with test functions $\chi_1, \chi_2 \in H^1_0$, respectively, to get
\begin{equation}\label{weak}
\left\{\begin{aligned}
&\beta^{-1}(u_{t}, \chi_1) + (\mathbf{a}\nabla u, \nabla \chi_1) - 2\sum_{i=1}^k(\mathbf{a}\nabla v_i,  \nabla u)(v_i,\chi_1) \\[-0.1in]
&\qquad\qquad\qquad\qquad= (f(u),\chi_1)-2\sum_{i=1}^k(f(u),v_i)(v_i,\chi_1), \\
&\gamma^{-1}((v_i)_t,\chi_2)= -(\mathbf{a}\nabla v_i, \nabla \chi_2)+(f'(u)v_i,\chi_2)\\
&\qquad\qquad\qquad\qquad+ \sum_{j=1}^k(v_j,\chi_2)\big[(\mathbf{a}\nabla v_i,\nabla v_j)- (f'(u)v_i, v_j)\big],~1\leq i\leq k.
\end{aligned}
    \right.
\end{equation}

Given $0<N\in \mathbb{N}$, we define a uniform temporal partition of the interval $[0,T]$ by $t_n=n\tau$ for $0\leq n\leq N$ with time step size $\tau = T/N$. We approximate the derivative by the backward Euler scheme at $t_n$ as $
	y_t(t_n)=\bar \partial_t y_n+R_n^y := \frac{y(t_n)-y(t_{n-1})}{\tau}+\frac{1}{\tau}\int _{t_{n-1}}^{t_n} y_{tt}(t)(t-t_{n-1})dt,
$
where $y$ refers to $u$ or $v_i$.  
We invoke such temporal discretization in (\ref{weak}) and multiply $(v_i, v_i)=1$ on several terms to get
\begin{equation}\label{ref}
\left\{
\begin{array}{l}
\!\!\!\! \ds \beta ^{-1}\big(\bar \partial _t u(t_n)+R_n^u, \chi_1 \big) +  \big(\mathbf{a}\nabla u(t_{n}), \nabla \chi_1 \big) - 2\sum_{i=1}^k\big(\mathbf{a}\nabla v_i(t_{n}), \nabla u(t_{n})\big)(v_i(t_{n}), \chi_1) \\[-0.05in]
\ds \!\!\!\quad =  (f(u(t_{n})), \chi_1) - 2\sum_{i=1}^k\big(f(u(t_{n})), v_i(t_{n})\big)(v_i(t_{n}),\chi_1),\\[0.2in]
\!\!\!\!\gamma^{-1}\big(\bar \partial _t v_i(t_n)\!+\!R_n^{v_i}, \chi_2 \big)\!=\! - (v_i(t_n), v_i(t_n))\big[(\mathbf{a}\nabla v_i(t_{n}), \nabla \chi_2)\!-\!(f'(u(t_{n}))v_i(t_{n}),\chi_2)\big]\\[0.15in]
\!\!\!\quad + (v_i(t_{n}), \chi_2)\big[(\mathbf{a}\nabla v_i(t_{n}),\nabla v_i(t_{n}))-(f'(u(t_{n}))v_i(t_{n}), v_i(t_{n}))\big]\\[0.15in]
\ds \!\!\!\quad + \sum_{l\neq i}(v_i(t_n), v_i(t_n))(v_l(t_{n}), \chi_2)\big[(\mathbf{a}\nabla v_i(t_{n}),\nabla v_l(t_{n}))-(f'(u(t_{n}))v_i(t_{n}), v_l(t_{n}))\big].
\end{array}
\right.
\end{equation}

Before giving the fully-discrete scheme, we discuss the initial values of the numerical solutions denoted by $\{u^n_h,v^n_{i,h}\}_{n=0,i=1}^{N,k}$. We could conventionally set $u_h^0=P u_0$, while, to ensure the orthonormality of the initial values $\{v_{i,h}^0\}_{i=1}^k$, we generating them by orthonormalizing $\{Pv_{i,0}\}_{i=1}^k$ using the Gram-Schmidt procedure.

Now we show that $\{v_{i,h}^0\}_{i=1}^k$ are well approximations to $\{Pv_{i,0}\}_{i=1}^k$ (and thus $\{v_{i,0}\}_{i=1}^k$).
 By
$
(Pv_{i,0},Pv_{j,0})=(v_{i,0},v_{j,0})+(Pv_{i,0}-v_{i,0},Pv_{j,0})+(v_{i,0},Pv_{j,0}-v_{j,0}),
$
 $(v_{i,0},v_{j,0})=\delta_{i,j}$ for $1\leq i,j\leq k$ and (\ref{eta1}), we have
$
|(Pv_{i,0},Pv_{j,0})-\delta_{i,j}|\leq Qh^2$ for $1\leq i,j\leq k
$
and $v_{i,0}\in H^2\cap H^1_0$. Then we follow exactly the same derivation as \cite[Lemma 4.2]{Z3} to get
\begin{align}\label{jy8}
\|Pv_{i,0}-v_{i,h}^0\|\leq Qh^2,~~1\leq i\leq k,\text{ for }h\text{ small enough}.
\end{align}
Furthermore, we invoke this and the inverse estimate $\|\chi_h\|_{H^1}\leq Qh^{-1}\|\chi_h\|$ for any $\chi_h\in S_h$ \cite{Bre5} to get the gradient bound of $v_{i,h}^0$
$$\|\nabla v_{i,h}^0\|\leq \|\nabla (v_{i,h}^0-Pv_{i,0})\|+\|\nabla Pv_{i,0}\|\leq Qh^{-1}\|v_{i,h}^0-Pv_{i,0}\|+\|\nabla v_{i,0}\|\leq Q. $$

Now we drop the truncation errors in (\ref{ref}) and adjust time steps in several terms (e.g. we replace $v_i(t_n)$ by $v_{i, h}^{n-\frac{1}{2}}:= \frac{v_{i, h}^{n}+v_{i, h}^{n-1}}{2} $ in several places for the sake of orthonormality preservation) to propose the retraction-free orthonormality-preserving scheme: find $\{u^n_h,v^n_{i,h}\}_{n=1,i=1}^{N,k}$ such that for any $\chi_1,\chi_2 \in S_h$
\begin{equation}\left\{
\begin{array}{l}
 \ds \beta ^{-1}\big(\frac{u^n_h-u^{n-1}_h}{\tau}, \chi_1\big)+(\mathbf{a}\nabla u^{n}_h, \nabla \chi_1)-2\sum_{i=1}^k(\mathbf{a}\nabla v^{n-1}_{i, h}, \nabla u^{n}_h)(v^{n-1}_{i, h}, \chi_1)\\[0.1in]
\ds \qquad  =(f(u^{n-1}_h), \chi_1)-2\sum_{i=1}^{k}(f(u^{n-1}_h), v^{n-1}_{i, h})(v^{n-1}_{i, h},\chi_1),\\[0.2in]
\ds \gamma^{-1}\big(\frac{ v^{n}_{i, h}-v^{n-1}_{i, h}}{\tau}, \chi_2\big)=-(v_{i, h}^{n-\frac{1}{2}}, v_{i, h}^{n-\frac{1}{2}})\big[ (\mathbf{a}\nabla v^{n-\frac{1}{2}}_{i, h}, \nabla \chi_2)-(f'(u_h^n)v_{i, h}^{n-\frac{1}{2}}, \chi_2) \big]\\[0.15in]
\ds \quad +(v_{i, h}^{n-\frac{1}{2}}, \chi_2)\big[(\mathbf{a}\nabla v_{i, h}^{n-\frac{1}{2}}, \nabla v_{i, h}^{n-\frac{1}{2}})-(f'(u_h^n)v_{i, h}^{n-\frac{1}{2}}, v_{i, h}^{n-\frac{1}{2}})\big]\label{app}
\\[0.15in]
\ds \quad +\sum_{l< i}(v_{i, h}^{n-\frac{1}{2}}, v_{i, h}^{n-\frac{1}{2}})(v_{l,h}^{n}, \chi_2)\big[(\mathbf{a}\nabla v_{i, h}^{n-\frac{1}{2}}, \nabla v_{l, h}^{n})-(f'(u_h^n)v_{i, h}^{n-\frac{1}{2}}, v_{l, h}^{n})\big]\\[0.15in]
\ds \quad +\sum_{l> i}(v_{i, h}^{n-\frac{1}{2}}, v_{i, h}^{n-\frac{1}{2}})(v_{l,h}^{n-1}, \chi_2)\big[(\mathbf{a}\nabla v_{i, h}^{n-\frac{1}{2}}, \nabla v_{l, h}^{n-1})-(f'(u_h^n)v_{i, h}^{n-\frac{1}{2}}, v_{l, h}^{n-1})\big].
\end{array}\right.
\end{equation}

\begin{remark}
In Section \ref{sec32}, we will find that the adjusting terms  $(v_{i, h}^{n-\frac{1}{2}}, v_{i, h}^{n-\frac{1}{2}})$ is critical to ensure the preservation of orthonormality. To understand the underlying mechanism, we refer the relation $(v_{i, h}^{n-\frac{1}{2}}, v_{i, h}^{n-\frac{1}{2}})=1-\frac{\|v^{n}_{i, h}-v^{n-1}_{i, h}\|^2}{4}$ from (\ref{Zjy1}),  which is valid after we prove the orthonormality preservation of (\ref{app}). If we replace $(v_{i, h}^{n-\frac{1}{2}}, v_{i, h}^{n-\frac{1}{2}})$ by $1-\frac{\|v^{n}_{i, h}-v^{n-1}_{i, h}\|^2}{4}$ in (\ref{app}), then the terms containing $\frac{\|v^{n}_{i, h}-v^{n-1}_{i, h}\|^2}{4}$, which are not generated from the original dynamics of $v_i$, could be viewed as the imposed Lagrangian multipliers or high-order perturbations that help to preserve the  orthonormality. \end{remark}

\subsection{Orthonormality preservation}\label{sec32}
We prove the orthonormality preservation of the scheme (\ref{app}). We first present the following auxiliary result.
\begin{lemma}\label{jump}
If  $(v_{i, h}^{n-1}, v_{j, h}^{n-1})=\delta_{ij}$ for $1\leq i,j\leq k$ holds for some $1\leq n\leq N$ and 
\begin{equation}\label{unless}
(\tau \gamma)^{-1} \neq \frac{1}{2}\big[(\mathbf{a}\nabla v_{i, h}^{n-\frac{1}{2}}, \nabla v_{i, h}^{n-\frac{1}{2}})-(f'(u_h^n)v_{i, h}^{n-\frac{1}{2}}, v_{i, h}^{n-\frac{1}{2}})\big],~~1\leq i\leq k,
\end{equation}
then $
	(v_{p, h}^{n}, v_{q, h}^{n-1})=0$ for $1\leq p < q \leq k.
$
\end{lemma}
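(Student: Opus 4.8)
The plan is to probe the second equation of the scheme (\ref{app}) with a single well-chosen test function and then exploit the triangular (Gauss--Seidel-type) coupling built into the auxiliary variables. Fix a pair $p<q$, take $i=p$, and set $\chi_2=v_{q,h}^{n-1}$. On the left-hand side, the inductive hypothesis $(v_{i,h}^{n-1},v_{j,h}^{n-1})=\delta_{ij}$ gives $(v_{p,h}^{n}-v_{p,h}^{n-1},v_{q,h}^{n-1})=(v_{p,h}^{n},v_{q,h}^{n-1})$ since $p\neq q$, so the whole term equals $(\tau\gamma)^{-1}x_{pq}$ with $x_{pq}:=(v_{p,h}^{n},v_{q,h}^{n-1})$. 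I would introduce the abbreviation $C_p:=(\mathbf{a}\nabla v_{p,h}^{n-\frac{1}{2}},\nabla v_{p,h}^{n-\frac{1}{2}})-(f'(u_h^n)v_{p,h}^{n-\frac{1}{2}},v_{p,h}^{n-\frac{1}{2}})$ and then track the four groups of terms on the right-hand side of the $v_p$-equation.

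The crucial step is a cancellation. In the last sum ($l>i=p$) the coefficient $(v_{l,h}^{n-1},v_{q,h}^{n-1})=\delta_{lq}$ selects only $l=q$, which is admissible precisely because $q>p$; this produces $+(v_{p,h}^{n-\frac{1}{2}},v_{p,h}^{n-\frac{1}{2}})\big[(\mathbf{a}\nabla v_{p,h}^{n-\frac{1}{2}},\nabla v_{q,h}^{n-1})-(f'(u_h^n)v_{p,h}^{n-\frac{1}{2}},v_{q,h}^{n-1})\big]$, which is exactly the negative of the first right-hand term, so the two annihilate. The second term becomes $(v_{p,h}^{n-\frac{1}{2}},v_{q,h}^{n-1})\,C_p=\tfrac{1}{2}x_{pq}\,C_p$, using $v_{p,h}^{n-\frac{1}{2}}=\tfrac{1}{2}(v_{p,h}^{n}+v_{p,h}^{n-1})$ together with $(v_{p,h}^{n-1},v_{q,h}^{n-1})=0$. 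The remaining sum ($l<p$) enters only through the coefficients $(v_{l,h}^{n},v_{q,h}^{n-1})=x_{lq}$, i.e. through strictly lower-indexed mixed inner products. Collecting everything, the weak identity collapses to the scalar relation
\begin{equation*}
\Big((\tau\gamma)^{-1}-\tfrac{1}{2}C_p\Big)\,x_{pq}=\sum_{l<p}(v_{p,h}^{n-\frac{1}{2}},v_{p,h}^{n-\frac{1}{2}})\,x_{lq}\,\big[(\mathbf{a}\nabla v_{p,h}^{n-\frac{1}{2}},\nabla v_{l,h}^{n})-(f'(u_h^n)v_{p,h}^{n-\frac{1}{2}},v_{l,h}^{n})\big].
\end{equation*}

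This is a lower-triangular linear recursion in $p$ for each fixed $q$. The non-resonance assumption (\ref{unless}) says exactly that the scalar prefactor $(\tau\gamma)^{-1}-\tfrac{1}{2}C_p$ is nonzero, so $x_{pq}$ is uniquely determined by the $x_{lq}$ with $l<p$. I would then close the argument by induction on $p$: for $p=1$ the right-hand sum is empty, forcing $x_{1q}=0$; assuming $x_{lq}=0$ for all $l<p$, the right-hand side vanishes and hence $x_{pq}=0$. As $q>p$ was arbitrary, this gives $(v_{p,h}^{n},v_{q,h}^{n-1})=0$ for all $1\leq p<q\leq k$.

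The heart of the proof, and the only genuinely nontrivial point, is spotting the cancellation above. It is engineered by the asymmetric treatment of the coupling terms in (\ref{app}) --- using $v_{l,h}^{n}$ for $l<i$ but $v_{l,h}^{n-1}$ for $l>i$ --- which is what renders the system triangular rather than fully coupled; without this ordering $x_{pq}$ and $x_{qp}$ would intertwine and the clean induction would break down. By contrast, the factor $(v_{i,h}^{n-\frac{1}{2}},v_{i,h}^{n-\frac{1}{2}})$ plays only a passive role here, multiplying terms that either cancel or already vanish by the induction, whereas condition (\ref{unless}) is indispensable since it is precisely what licenses the division by the scalar prefactor at each step.
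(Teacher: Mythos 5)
Your proof is correct and follows essentially the same route as the paper: test the $v_p$-equation with $\chi_2=v_{q,h}^{n-1}$, use $(v_{l,h}^{n-1},v_{q,h}^{n-1})=\delta_{lq}$ so that the $l=q$ term of the $l>p$ sum cancels the leading term, reduce the $l<p$ sum via induction on $p$, and divide by the nonzero prefactor guaranteed by (\ref{unless}). The only cosmetic difference is that you first write down the general triangular recursion and then induct, whereas the paper carries the induction hypothesis directly into the computation.
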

\begin{remark}
It is worth mentioning that the condition (\ref{unless}) could be naturally satisfied for $\tau$ small enough. Specifically,
as $\tau$ tends to $0$, the left-hand side of (\ref{unless}) tends to infinity while the right-hand side may get close to its continuous counterpart, which is bounded if $\|v_i\|_{H^1(\Omega)}\leq Q$ for $t\in [0,T]$ and $|u|\leq Q$ for $(x,t)\in\Omega\times [0,T]$. 
\end{remark}
\begin{proof}
We prove this conclusion by induction. For $i=1$, we choose $\chi_2=v_{q, h}^{n-1}$ with $1 < q \leq k $ in the second equation of (\ref{app}) to get
\begin{align*}
& \gamma^{-1}\big(\frac{ v^{n}_{1, h}-v^{n-1}_{1, h}}{\tau}, v_{q, h}^{n-1}\big)=-(v_{1, h}^{n-\frac{1}{2}}, v_{1, h}^{n-\frac{1}{2}})\big[ (\mathbf{a} \nabla v^{n-\frac{1}{2}}_{1, h}, \nabla v_{q, h}^{n-1})-(f'(u_h^n)v_{1, h}^{n-\frac{1}{2}}, v_{q, h}^{n-1}) \big]\\[0.05in]
& \ds \qquad +(v_{1, h}^{n-\frac{1}{2}}, v_{q, h}^{n-1})\big[(\mathbf{a} \nabla v_{1, h}^{n-\frac{1}{2}}, \nabla v_{1, h}^{n-\frac{1}{2}})-(f'(u_h^n)v_{1, h}^{n-\frac{1}{2}}, v_{1, h}^{n-\frac{1}{2}})\big]\\[0.05in]
& \ds \qquad +\sum_{l> 1}(v_{1, h}^{n-\frac{1}{2}}, v_{1, h}^{n-\frac{1}{2}})(v_{l,h}^{n-1}, v_{q, h}^{n-1})\big[(\mathbf{a}\nabla v_{1, h}^{n-\frac{1}{2}}, \nabla v_{l, h}^{n-1})-(f'(u_h^n)v_{1, h}^{n-\frac{1}{2}}, v_{l, h}^{n-1})\big],
\end{align*}
which, together with $(v_{i, h}^{n-1}, v_{j, h}^{n-1})=\delta_{ij}$ for $1\leq i,j\leq k$,
leads to
\begin{align*}
(\tau \gamma)^{-1}\big(v^{n}_{1, h}, v_{q, h}^{n-1}\big)=\frac{1}{2}(v_{1, h}^{n}, v_{q, h}^{n-1})\big[(\mathbf{a} \nabla v_{1, h}^{n-\frac{1}{2}}, \nabla v_{1, h}^{n-\frac{1}{2}})-(f'(u_h^n)v_{1, h}^{n-\frac{1}{2}}, v_{1, h}^{n-\frac{1}{2}})\big].
\end{align*}
By (\ref{unless}), we have $(v_{1,h}^{n}, v_{q,h}^{n-1})=0$ for $1 < q \leq k$. 

Suppose that, for a given $1\leq p\leq k$, $(v_{i,h}^{n}, v_{q,h}^{n-1})=0$ for $ i<q\leq k$ and $1\leq i\leq p-1$. Then we intend to show the case of $i=p$, that is, $(v_{p,h}^{n}, v_{q,h}^{n-1})=0$ for $ p<q\leq k$. To prove this, we select $ \chi_2= v_{q ,h}^{n-1}$ for $p< q\leq k $ in the second equation of (\ref{app}) with $i=p$ to obtain
\begin{align*}
& \gamma^{-1}\big(\frac{ v^{n}_{p, h}-v^{n-1}_{p, h}}{\tau}, v_{q, h}^{n-1}\big)=-(v_{p, h}^{n-\frac{1}{2}}, v_{p, h}^{n-\frac{1}{2}})\big[ (\mathbf{a}\nabla v^{n-\frac{1}{2}}_{p, h}, \nabla v_{q, h}^{n-1})-(f'(u_h^n)v_{p, h}^{n-\frac{1}{2}}, v_{q, h}^{n-1}) \big]\\[0.05in]
& \ds \qquad +(v_{p, h}^{n-\frac{1}{2}}, v_{q, h}^{n-1})\big[(\mathbf{a}\nabla v_{p, h}^{n-\frac{1}{2}}, \nabla v_{p, h}^{n-\frac{1}{2}})-(f'(u_h^n)v_{p, h}^{n-\frac{1}{2}}, v_{p, h}^{n-\frac{1}{2}})\big]\\[0.05in]
& \ds \qquad +\sum_{l < p}(v_{p, h}^{n-\frac{1}{2}}, v_{p, h}^{n-\frac{1}{2}})(v_{l,h}^{n}, v_{q, h}^{n-1})\big[(\mathbf{a}\nabla v_{p, h}^{n-\frac{1}{2}}, \nabla v_{l, h}^{n})-(f'(u_h^n)v_{p, h}^{n-\frac{1}{2}}, v_{l, h}^{n})\big]\\[0.05in]
& \ds \qquad +\sum_{l> p}(v_{p, h}^{n-\frac{1}{2}}, v_{p, h}^{n-\frac{1}{2}})(v_{l,h}^{n-1}, v_{q, h}^{n-1})\big[(\mathbf{a} \nabla v_{p, h}^{n-\frac{1}{2}}, \nabla v_{l, h}^{n-1})-(f'(u_h^n)v_{p, h}^{n-\frac{1}{2}}, v_{l, h}^{n-1})\big],
\end{align*}
which, together with $(v_{i, h}^{n-1}, v_{j, h}^{n-1})=\delta_{ij}$ for $1\leq i,j\leq k$ and the induction hypothesis,
leads to
\begin{align*}
&(\tau \gamma)^{-1}\big(v^{n}_{p, h}, v_{q, h}^{n-1}\big)=\frac{1}{2}(v_{p, h}^{n}, v_{q, h}^{n-1})\big[(\mathbf{a} \nabla v_{p, h}^{n-\frac{1}{2}}, \nabla v_{p, h}^{n-\frac{1}{2}})-(f'(u_h^n)v_{p, h}^{n-\frac{1}{2}}, v_{p, h}^{n-\frac{1}{2}})\big].
\end{align*}
We invoke (\ref{unless}) to get $(v_{p,h}^{n}, v_{q,h}^{n-1})=0$ for $ p<q\leq k$, which completes the induction procedure and thus the whole proof.
\end{proof}

Now we prove the orthonormality of the scheme (\ref{app}).

\begin{theorem}\label{orthf}
	Suppose (\ref{unless}) holds for $1\leq n\leq N$. Then the numerical solutions of the scheme (\ref{app}) satisfy
	$(v_{i, h}^n,v_{j, h}^n)=\delta_{ij}$ for $1\leq i,j \leq k$ and $0\leq  n\leq N$.
\end{theorem}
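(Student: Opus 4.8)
The plan is to argue by induction on the time level $n$, with the base case $n=0$ following directly from the Gram--Schmidt construction of $\{v_{i,h}^0\}_{i=1}^k$. Assuming $(v_{i,h}^{n-1},v_{j,h}^{n-1})=\delta_{ij}$ for all $1\le i,j\le k$, I would first record that Lemma \ref{jump} is applicable, its hypotheses being exactly this induction assumption together with the standing condition (\ref{unless}), so that $(v_{p,h}^n,v_{q,h}^{n-1})=0$ for all $1\le p<q\le k$. The core of the inductive step is then a second (inner) induction on the index $i$, in which for each $i$ I establish, in this order, first the orthogonality $(v_{i,h}^n,v_{l,h}^n)=0$ for every $l<i$, and then the normalization $\|v_{i,h}^n\|=1$; ranging over all $i$ these yield $(v_{i,h}^n,v_{j,h}^n)=\delta_{ij}$.

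For the orthogonality part I would test the $v_i$-equation of (\ref{app}) with $\chi_2=v_{l,h}^n$ for a fixed $l<i$. The key point is that every inner product that appears can be evaluated from information already available. On the left, $(v_{i,h}^{n-1},v_{l,h}^n)=0$ by Lemma \ref{jump}, so the left-hand side is $(\tau\gamma)^{-1}(v_{i,h}^n,v_{l,h}^n)$. Writing $A_i:=(\mathbf a\nabla v_{i,h}^{n-\frac12},\nabla v_{i,h}^{n-\frac12})-(f'(u_h^n)v_{i,h}^{n-\frac12},v_{i,h}^{n-\frac12})$, the second (diagonal) term contributes $\tfrac12(v_{i,h}^n,v_{l,h}^n)A_i$ after using $(v_{i,h}^{n-1},v_{l,h}^n)=0$ once more; in the $l'<i$ sum the factor $(v_{l',h}^n,v_{l,h}^n)=\delta_{l'l}$ by the inner induction hypothesis, so the surviving $l'=l$ term cancels the first term of the right-hand side exactly; and in the $l'>i$ sum each factor $(v_{l',h}^{n-1},v_{l,h}^n)$ vanishes by Lemma \ref{jump} since $l<l'$. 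What remains is the scalar identity $\big[(\tau\gamma)^{-1}-\tfrac12 A_i\big](v_{i,h}^n,v_{l,h}^n)=0$, whence (\ref{unless}) forces the bracket to be nonzero and gives $(v_{i,h}^n,v_{l,h}^n)=0$.

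For the normalization part I would test the same equation with $\chi_2=v_{i,h}^{n-\frac12}=\tfrac12(v_{i,h}^n+v_{i,h}^{n-1})$, making the left-hand side $\tfrac{1}{2\tau\gamma}(\|v_{i,h}^n\|^2-\|v_{i,h}^{n-1}\|^2)$. The decisive structural feature is that the first two terms of the right-hand side cancel identically for this choice. In the $l'<i$ sum, $(v_{l',h}^n,v_{i,h}^{n-\frac12})=\tfrac12[(v_{l',h}^n,v_{i,h}^n)+(v_{l',h}^n,v_{i,h}^{n-1})]$ vanishes because the first summand is zero by the orthogonality just proved and the second by Lemma \ref{jump}; in the $l'>i$ sum, $(v_{l',h}^{n-1},v_{i,h}^{n-\frac12})=\tfrac12[(v_{l',h}^{n-1},v_{i,h}^n)+(v_{l',h}^{n-1},v_{i,h}^{n-1})]$ vanishes because the first summand is zero by Lemma \ref{jump} (now with $i<l'$) and the second by the level-$(n-1)$ hypothesis. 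Hence $\|v_{i,h}^n\|^2=\|v_{i,h}^{n-1}\|^2=1$, closing the inner induction.

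The main obstacle I anticipate is conceptual bookkeeping rather than delicate estimation. One must recognize that the normalization of $v_{i,h}^n$ cannot be obtained in isolation but only after the orthogonality of $v_{i,h}^n$ to the earlier $v_{l,h}^n$ $(l<i)$ is in hand, which is precisely why the inner induction on $i$ must proceed in the order orthogonality-then-normalization, and why the asymmetric time-level placement in (\ref{app})---new level for $l<i$, old level for $l>i$---dovetails exactly with the range $p<q$ in Lemma \ref{jump}. Care is required to keep straight which of the three facts (the level-$(n-1)$ hypothesis, Lemma \ref{jump}, or the inner hypothesis at level $n$) annihilates each inner product, since a single misassignment would destroy one of the cancellations on which the argument rests.
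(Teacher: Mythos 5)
Your proposal is correct and follows essentially the same route as the paper's proof: an outer induction on the time level, Lemma \ref{jump} as the cross-level input, and an inner induction on $i$ that establishes orthogonality to the earlier $v_{l,h}^n$ (via $\chi_2=v_{l,h}^n$ and condition (\ref{unless})) before normalization (via $\chi_2=v_{i,h}^{n-\frac12}$). The cancellation bookkeeping you describe, including the role of $(v_{l,h}^n,v_{l,h}^n)=1$ in cancelling the leading term against the $l'=l$ summand, matches the paper's argument exactly.
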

\begin{proof}
 Since the initial values are orthonormal, the conclusion is valid for $n=0$. We then prove by induction.  Suppose $(v_{i, h}^{n-1}, v_{j, h}^{n-1})=\delta_{ij}$ for $1\leq i,j\leq k$, and we intend to prove 
$
 (v_{i, h}^{n}, v_{j, h}^{n})=\delta_{ij}
$ for $1\leq i,j\leq k$.
 
We take $\chi_2=v_{1, h}^{n-\frac{1}{2}}$ in the second equation of (\ref{app}) with $i=1$ and apply Lemma \ref{jump}, which implies 
 $(v_{l,h}^{n-1}, v_{1, h}^{n-\frac{1}{2}})=\frac{1}{2}(v_{l,h}^{n-1}, v_{1, h}^{n}+v_{1, h}^{n-1})=0$ for $1<l\leq k$,
to get $\gamma^{-1}(v_{1,h}^{n}-v_{1,h}^{n-1},v_{1,h}^{n-\frac{1}{2}})=0$, that is,
$\|v_{1,h}^{n}\|=1$. 
We then take $\chi_2=v_{1,h }^{n}$ in the second equation of (\ref{app}) with $i=2$ to obtain
\begin{equation*}
	\begin{aligned}
&\ds \gamma^{-1}\big(\frac{ v^{n}_{2, h}-v^{n-1}_{2, h}}{\tau}, v_{1,h }^{n}\big)=-(v_{2, h}^{n-\frac{1}{2}}, v_{2, h}^{n-\frac{1}{2}})\big[ (\mathbf{a} \nabla v^{n-\frac{1}{2}}_{2, h}, \nabla v_{1,h }^{n})-(f'(u_h^n)v_{2, h}^{n-\frac{1}{2}}, v_{1,h }^{n}) \big]\\
&\ds \qquad +(v_{2, h}^{n-\frac{1}{2}}, v_{1,h }^{n})\big[(\mathbf{a} \nabla v_{2, h}^{n-\frac{1}{2}}, \nabla v_{2, h}^{n-\frac{1}{2}})-(f'(u_h^n)v_{2, h}^{n-\frac{1}{2}}, v_{2, h}^{n-\frac{1}{2}})\big]\\
&\ds \qquad +\sum_{l< 2}(v_{2, h}^{n-\frac{1}{2}}, v_{2, h}^{n-\frac{1}{2}})(v_{l,h}^{n}, v_{1,h }^{n})\big[(\mathbf{a} \nabla v_{2, h}^{n-\frac{1}{2}}, \nabla v_{l, h}^{n})-(f'(u_h^n)v_{2, h}^{n-\frac{1}{2}}, v_{l, h}^{n})\big]\\
&\ds \qquad +\sum_{l>2}(v_{2, h}^{n-\frac{1}{2}}, v_{2, h}^{n-\frac{1}{2}})(v_{l,h}^{n-1}, v_{1,h }^{n})\big[(\mathbf{a} \nabla v_{2, h}^{n-\frac{1}{2}}, \nabla v_{l, h}^{n-1})-(f'(u_h^n)v_{2, h}^{n-\frac{1}{2}}, v_{l, h}^{n-1})\big],
\end{aligned}
\end{equation*} 
which, together with Lemma \ref{jump}, leads to
\begin{equation*}
	\begin{aligned}
& (\tau \gamma)^{-1}\big( v^{n}_{2, h}, v_{1,h}^{n}\big)=\frac{1}{2}(v_{2, h}^{n}, v_{1,h }^{n})\big[(\mathbf{a} \nabla v_{2, h}^{n-\frac{1}{2}}, \nabla v_{2, h}^{n-\frac{1}{2}})-(f'(u_h^n)v_{2, h}^{n-\frac{1}{2}}, v_{2, h}^{n-\frac{1}{2}})\big].
	\end{aligned}
\end{equation*} 
By (\ref{unless}), we have 
$
(v_{1,h}^{n}, v_{2,h}^{n})=0.
$
Furthermore, we select $\chi_2=v_{2,h}^{n-\frac{1}{2}}$ in (\ref{app}) with $i=2$  and employ Lemma \ref{jump} and $
(v_{1,h}^{n}, v_{2,h}^{n})=0
$ to get $\gamma^{-1}(v_{2,h}^{n}-v_{2,h}^{n-1},v_{2,h}^{n-1/2})=0$, which implies $\|v_{2,h}^{n}\|=1$. 

Now we summarize the obtained results as
$
	(v_{p, h}^n, v_{j, h}^n)=\delta_{pj}$ for $ 1\leq  j \leq p \leq 2.
$
Based on this, we could apply an inner induction, i.e., suppose that for some $i\leq k$,
\begin{equation*}
	(v_{p, h}^n, v_{j, h}^n)=\delta_{pj}\text{ for } 1\leq  j \leq p \leq i-1,
\end{equation*}
and we intend to prove the case of $p=i$, i.e., $(v_{i, h}^n, v_{j, h}^n)=\delta_{ij}$ for $ 1\leq  j \leq i$.
For $1\leq j<i$, we set $\chi_2= v_{j,h}^{n}$ in the second equation of (\ref{app}) to obtain 
\begin{equation*}
\begin{aligned}
& (\tau \gamma)^{-1}(v_{i,h}^{n}, v_{j,h}^{n}) =-(v_{i, h}^{n-\frac{1}{2}}, v_{i, h}^{n-\frac{1}{2}})\big[ (\mathbf{a} \nabla v^{n-\frac{1}{2}}_{i, h}, \nabla  v_{j,h}^{n} )-(f'(u_h^n)v_{i, h}^{n-\frac{1}{2}},  v_{j,h}^{n} ) \big]\\
& \ds \quad +(v_{i, h}^{n-\frac{1}{2}},  v_{j,h}^{n})\big[(\mathbf{a} \nabla v_{i, h}^{n-\frac{1}{2}}, \nabla v_{i, h}^{n-\frac{1}{2}})-(f'(u_h^n)v_{i, h}^{n-\frac{1}{2}}, v_{i, h}^{n-\frac{1}{2}})\big]\\
& \ds \quad +\sum_{l<i}(v_{i, h}^{n-\frac{1}{2}}, v_{i, h}^{n-\frac{1}{2}})(v_{l,h}^{n},  v_{j,h}^{n})\big[(\mathbf{a} \nabla v_{i, h}^{n-\frac{1}{2}}, \nabla v_{l, h}^{n})-(f'(u_h^n)v_{i, h}^{n-\frac{1}{2}}, v_{l, h}^{n})\big]\\
& \ds \quad +\sum_{l>i}(v_{i, h}^{n-\frac{1}{2}}, v_{i, h}^{n-\frac{1}{2}})(v_{l,h}^{n-1},  v_{j,h}^{n})\big[(\mathbf{a} \nabla v_{i, h}^{n-\frac{1}{2}}, \nabla v_{l, h}^{n-1})-(f'(u_h^n)v_{i, h}^{n-\frac{1}{2}}, v_{l, h}^{n-1})\big],
\end{aligned}
\end{equation*}
which, together with Lemma \ref{jump}, leads to
\begin{equation*}
\begin{aligned}
& (\tau \gamma)^{-1}(v_{i,h}^{n}, v_{j,h}^{n}) = \frac{1}{2}(v_{i, h}^{n},  v_{j,h}^{n})\big[(\mathbf{a}\nabla v_{i, h}^{n-\frac{1}{2}}, \nabla v_{i, h}^{n-\frac{1}{2}})-(f'(u_h^n)v_{i, h}^{n-\frac{1}{2}}, v_{i, h}^{n-\frac{1}{2}})\big].
	\end{aligned}
\end{equation*}
From (\ref{unless}), it follows that $ (v_{i,h}^{n}, v_{j,h}^{n}) =0$ for $1\leq j<i$. We then set $\chi_2= v_{i,h}^{n-\frac{1}{2}}$ in the second equation of (\ref{app}) and apply Lemma \ref{jump} to obtain  $(v_{i,h}^{n}-v_{i,h}^{n-1},v_{i,h}^{n-1/2})=0$, that is,
$\|v_{i,h}^{n}\|=1$. Thus, we complete the inner induction to obtain
$
	(v_{p, h}^n, v_{j, h}^n)=\delta_{p,j}$ for $ 1\leq  j \leq p\leq k.
$
 Consequently, the outer induction has also been completed and we have reached the conclusion.
\end{proof}

\section{Estimates of gradients}\label{sec4} 
Concerning the coupling and the gradient nonlinearity of the scheme (\ref{app}), it is difficult to prove the gradient stability for (\ref{app}). For this reason, in the study of the index-1 case \cite{ZhaZheZhu}, 
the numerical analysis is performed based on the assumption $\|u_h^n\|_{L^\infty}\leq Q$ for $0\leq n\leq N$. In the current work, we circumvent this 
assumption by employing the favorable properties of the scheme (\ref{app}), which reduces the difficulties of handling the coupling and gradient nonlinearity. Specifically, the numerical analysis can be performed based on an alternative condition  $u\in L^\infty(0,T;L^\infty)$, a natural assumption in conventional  numerical analysis.

Suppose  $u\in L^\infty(0,T;L^\infty)$. Since $f$ and $f'$ are local Lipschitz functions as assumed in the \textit{Assumption A}, we can define cutoff functions $\tilde f(z)$ and $\widetilde{f'}(z)$ of $f(z)$ and $f'(z)$, respectively, which equals to $f(z)$ and $f'(z)$, respectively, for $z\in [-\|u\|_{L^\infty(0,T;L^\infty)}-1,\|u\|_{L^\infty(0,T;L^\infty)}+1]$ and satisfy $|\tilde f|+|\widetilde{f'}|\leq Q$ and the global Lipschitz condition over $z\in\mathbb R$. Then we define an auxiliary scheme (denoted by \textit{AUX scheme} in the rest of the work) for the original scheme (\ref{app}), which coincides with (\ref{app}) with $f$ and $f'$ replaced by $\tilde f$ and $\widetilde{f'}$, respectively, and $\{u_h^n, v_{i,h}^n\}_{n=1}^{N}$ replaced by $\{\tilde u_h^n, \tilde v_{i,h}^n\}_{n=1}^{N}$ for $1\leq i\leq k$.

Following the same derivations in Section \ref{sec32}, the orthonormality preservation stated in Theorem \ref{orthf} still holds for the AUX scheme.
In subsequent contents, we still use the notations $u_{h}^n$ and $v_{i,h}^n$ to represent the solutions $\tilde u_{h}^n$ and $\tilde v_{i,h}^n$, respectively, of the AUX scheme for simplicity until Theorem \ref{thm}.
\begin{theorem}\label{stable}
Under the Assumption A and  (\ref{unless}) with $1\leq n\leq N$, we have $
	 \|\nabla u_h^n\|+\sum_{i=1}^k\|\nabla v_{i, h}^n\| \leq Q$ for the solutions of the AUX scheme for $\tau$  small enough.
\end{theorem}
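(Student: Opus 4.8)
The plan is to establish gradient bounds for the AUX scheme by testing the two equations of (\ref{app}) with appropriately chosen discrete functions and then assembling the resulting estimates with a discrete Gronwall argument. I would treat the $v_i$ equations first, since by Theorem \ref{orthf} the auxiliary variables are orthonormal and this orthonormality (together with the boundedness $|\widetilde{f'}|\leq Q$ of the cutoff derivative) decouples the analysis of the $v_i$ gradients from $u$. Concretely, in the second equation of (\ref{app}) I would take $\chi_2 = -\gamma^{-1}\tau\,\Delta_h v_{i,h}^{n-\frac{1}{2}}$ (or equivalently exploit the elliptic projection structure by testing in a way that produces $(\mathbf{a}\nabla v_{i,h}^{n-\frac{1}{2}},\nabla v_{i,h}^{n-\frac{1}{2}})$ type terms), so that the principal diffusion term yields a coercive quantity bounded below by $a_0\|\nabla v_{i,h}^{n-\frac{1}{2}}\|^2$. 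The scalar coefficients $(v_{i,h}^{n-\frac{1}{2}},v_{i,h}^{n-\frac{1}{2}})$ and the inner products $(v_{l,h}^{\cdot},\chi_2)$ appearing in the coupling sums are bounded using orthonormality and (\ref{Zjy1})-type identities, while the reaction terms involving $\widetilde{f'}$ are controlled by $Q\|v_{i,h}^{n-\frac{1}{2}}\|\,\|\chi_2\|$ and absorbed via Young's inequality.

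The crucial structural observation I would exploit is that, because $\{v_{i,h}^n\}$ stays orthonormal, the problematic quadratic-gradient terms such as $(\mathbf{a}\nabla v_{i,h}^{n-\frac{1}{2}},\nabla v_{l,h}^{n})$ that multiply the projection coefficients do not grow the $L^2$-norm of the right-hand side faster than the diffusion term can absorb. I would arrange a telescoping-in-$n$ estimate: summing the tested $v_i$ equation over the time index and using $2(a-b,a)=\|a\|^2-\|b\|^2+\|a-b\|^2$ on the discrete time difference to produce a discrete energy, I expect to obtain a bound of the form $\|\nabla v_{i,h}^n\|^2 \leq \|\nabla v_{i,h}^0\|^2 + Q\tau\sum_{m\leq n}\big(1+\sum_j\|\nabla v_{j,h}^m\|^2\big)$. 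Combined with the already-established initial bound $\|\nabla v_{i,h}^0\|\leq Q$ from the preliminaries, a discrete Gronwall inequality then closes the estimate for the $v_i$ gradients uniformly in $n$, provided $\tau$ is small enough for the implicit-in-$n$ terms to be absorbed.

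For the state variable $u$, I would test the first equation of (\ref{app}) with $\chi_1$ chosen to extract $(\mathbf{a}\nabla u_h^n,\nabla u_h^n)$, again obtaining coercivity $\geq a_0\|\nabla u_h^n\|^2$ on the left. The coupling terms $2\sum_i(\mathbf{a}\nabla v_{i,h}^{n-1},\nabla u_h^n)(v_{i,h}^{n-1},\chi_1)$ are now controlled because the $v_i$ gradients are already bounded by the previous step; each such term is estimated by $Q\|\nabla u_h^n\|\,\|v_{i,h}^{n-1}\|\,\|\chi_1\|$ and split off by Young's inequality so that the $\|\nabla u_h^n\|^2$ part is absorbed into the coercive term. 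The nonlinear forcing uses $|\tilde f|\leq Q$ from the cutoff construction, which is exactly why the AUX scheme (rather than the original scheme) is analyzed here and why the restrictive $\|u_h^n\|_{L^\infty}$ assumption of \cite{ZhaZheZhu} can be dropped. I expect the main obstacle to be the coupling in the $v_i$ equations: because the $v_i$ dynamics do not form a gradient flow on the Stiefel manifold, there is no ready-made energy decay, so the whole argument hinges on showing that the orthonormality constraint makes the quadratic gradient couplings benign enough to be absorbed by diffusion before the Gronwall step — handling these cross terms uniformly in $k$ and $n$, rather than the $u$-estimate, is where the delicate bookkeeping lies.
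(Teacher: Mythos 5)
Your treatment of the $v_i$ equations contains a genuine gap, and it sits exactly at the point you flag as ``where the delicate bookkeeping lies.'' With the test function $\chi_2=-\gamma^{-1}\tau\,\Delta_h v_{i,h}^{n-\frac12}$, the coupling coefficients $(v_{l,h}^{n},\chi_2)$ become, after integration by parts, pairings of the form $(\nabla v_{l,h}^{n},\nabla v_{i,h}^{n-\frac12})$; orthonormality only gives $\|v_{l,h}^{n}\|=1$ in $L^2$ and says nothing about these $H^1$ pairings. Each coupling term is then a product of two such gradient pairings, i.e.\ quartic in the gradients (and similarly the diagonal term $(v_{i,h}^{n-\frac12},\chi_2)\,(\mathbf{a}\nabla v_{i,h}^{n-\frac12},\nabla v_{i,h}^{n-\frac12})$ produces $\|\nabla v_{i,h}^{n-\frac12}\|^4$ with an unfavorable sign). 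Your claimed intermediate inequality $\|\nabla v_{i,h}^n\|^2\leq\|\nabla v_{i,h}^0\|^2+Q\tau\sum_{m\leq n}(1+\sum_j\|\nabla v_{j,h}^m\|^2)$ therefore does not follow: the right-hand side is superlinear in the energy, and the discrete Gronwall inequality does not close. This quartic gradient nonlinearity is precisely the obstruction the paper identifies, and it is not resolved by diffusion absorption or by orthonormality alone.

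The paper's resolution is a different test function: $\chi_2=v_{i,h}^{n}-v_{i,h}^{n-1}$. With this choice the coupling sums vanish \emph{identically} rather than being estimated, because $(v_{l,h}^{n},v_{i,h}^{n}-v_{i,h}^{n-1})=0$ for $l<i$ (by Theorem \ref{orthf} and Lemma \ref{jump}) and $(v_{l,h}^{n-1},v_{i,h}^{n}-v_{i,h}^{n-1})=0$ for $l>i$ (by Lemma \ref{jump} and orthonormality at level $n-1$) --- this is exactly why the scheme uses $v_{l,h}^{n}$ for $l<i$ and $v_{l,h}^{n-1}$ for $l>i$ in the coupling sums. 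The surviving terms combine, via $(v_{i,h}^{n-\frac12},v_{i,h}^{n-1})=(v_{i,h}^{n-\frac12},v_{i,h}^{n})$, into a telescoping difference $\frac12(\|\sqrt{\mathbf{a}}\nabla v_{i,h}^{n-1}\|^2-\|\sqrt{\mathbf{a}}\nabla v_{i,h}^{n}\|^2)$ multiplied by the nonnegative scalar $(v_{i,h}^{n-\frac12},v_{i,h}^{n})$; a short case analysis on the sign of the gradient increment, together with $(v_{i,h}^{n-\frac12},v_{i,h}^{n})\geq 1/2$ for $\tau$ small (which follows from $\|v_{i,h}^{n}-v_{i,h}^{n-1}\|\leq Q\tau$ and (\ref{Zjy1})), yields $\|\sqrt{\mathbf{a}}\nabla v_{i,h}^{n}\|^2-\|\sqrt{\mathbf{a}}\nabla v_{i,h}^{n-1}\|^2\leq Q\tau$, and direct summation finishes without Gronwall. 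Your outline for the $u$ equation is closer to the paper (which tests with $\chi_1=u_h^n-u_h^{n-1}$ and then applies Gronwall), but note that testing with $u_h^n$ itself would only give $\tau\sum_n\|\nabla u_h^n\|^2\leq Q$ rather than the uniform-in-$n$ bound; the time-difference test function is what makes the gradient energy telescope.
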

\begin{proof} 
We first take $\chi_2= v_{i,h}^{n}-v_{i,h}^{n-1}$ in the second equation of the AUX scheme and apply Lemma \ref{jump}  and Theorem \ref{orthf}, which implies $(v_{l,h}^{n}, v_{i,h}^{n}-v_{i,h}^{n-1})=0$ for $l<i$ and $(v_{l,h}^{n-1}, v_{i,h}^{n}-v_{i,h}^{n-1})=0$ for $l> i$, to get
\begin{equation*}
	\begin{aligned}
&\ds \gamma^{-1}\big(\frac{ v^{n}_{i, h}-v^{n-1}_{i, h}}{\tau}, v_{i,h}^{n}-v_{i,h}^{n-1} \big)\\
&=-(v_{i, h}^{n-\frac{1}{2}}, v_{i, h}^{n-\frac{1}{2}})\big[ (\mathbf{a} \nabla v^{n-\frac{1}{2}}_{i, h}, \nabla v_{i,h}^{n}-\nabla v_{i,h}^{n-1})-(\widetilde{f'}(u_h^n)v_{i, h}^{n-\frac{1}{2}}, v_{i,h}^{n}-v_{i,h}^{n-1}) \big]\\[0.1in]
&\ds \quad +(v_{i, h}^{n-\frac{1}{2}}, v_{i,h}^{n}-v_{i,h}^{n-1})\big[(\mathbf{a} \nabla v_{i, h}^{n-\frac{1}{2}}, \nabla v_{i, h}^{n-\frac{1}{2}})-(\widetilde{f'}(u_h^n)v_{i, h}^{n-\frac{1}{2}}, v_{i, h}^{n-\frac{1}{2}})\big]\\[0.1in]
&\ds \quad +\sum_{l<i}(v_{i, h}^{n-\frac{1}{2}}, v_{i, h}^{n-\frac{1}{2}})(v_{l,h}^{n}, v_{i,h}^{n}-v_{i,h}^{n-1})\big[(\mathbf{a} \nabla v_{i, h}^{n-\frac{1}{2}}, \nabla v_{l, h}^{n})-(\widetilde{f'}(u_h^n)v_{i, h}^{n-\frac{1}{2}}, v_{l, h}^{n})\big]\\
&\ds \quad +\sum_{l > i}(v_{i, h}^{n-\frac{1}{2}}, v_{i, h}^{n-\frac{1}{2}})(v_{l,h}^{n-1}, v_{i,h}^{n}-v_{i,h}^{n-1})\big[(\mathbf{a} \nabla v_{i, h}^{n-\frac{1}{2}}, \nabla v_{l, h}^{n-1})-(\widetilde{f'}(u_h^n)v_{i, h}^{n-\frac{1}{2}}, v_{l, h}^{n-1})\big]\\
&=-(v_{i, h}^{n-\frac{1}{2}}, v_{i, h}^{n-1})\big[ (\mathbf{a} \nabla v^{n-\frac{1}{2}}_{i, h}, \nabla v_{i,h}^{n})-(\widetilde{f'}(u_h^n)v_{i, h}^{n-\frac{1}{2}}, v_{i,h}^{n}) \big]\\[0.1in]
&\qquad +(v_{i, h}^{n-\frac{1}{2}}, v_{i,h}^{n})\big[(\mathbf{a} \nabla v_{i, h}^{n-\frac{1}{2}}, \nabla v_{i, h}^{n-1})-(\widetilde{f'}(u_h^n)v_{i, h}^{n-\frac{1}{2}}, v_{i, h}^{n-1})\big]\\[0.1in]
&=(v_{i, h}^{n-\frac{1}{2}}, v_{i,h}^{n}) \Big( \frac{1}{2} \|\sqrt{\mathbf{a}}\nabla v_{i, h}^{n-1}\|^2-\frac{1}{2}\|\sqrt{\mathbf{a}}\nabla v_{i, h}^{n}\|^2 + (\widetilde{f'}(u_h^n)v_{i, h}^{n-\frac{1}{2}}, v_{i, h}^{n}-v_{i, h}^{n-1})\Big),
	\end{aligned}
\end{equation*}
where we have used
$(v_{i, h}^{n-\frac{1}{2}}, v_{i, h}^{n-1})=[1+(v_{i, h}^{n-1},v_{i, h}^{n})]/2=(v_{i, h}^{n-\frac{1}{2}}, v_{i,h}^{n}) $ in the last equality.
We then invoke the boundedness of $\widetilde{f'}$ to derive 
\begin{equation}\label{s1}
 \gamma^{-1} \|v^{n}_{i, h}-v^{n-1}_{i, h}\|^2 \leq  -\frac{\tau}{2}(v_{i, h}^{n-\frac{1}{2}}, v_{i,h}^{n})(\|\sqrt{\mathbf{a}} \nabla v_{i, h}^{n}\|^2-\|\sqrt{\mathbf{a}} \nabla v_{i, h}^{n-1}\|^2)+Q\tau \| v_{i, h}^{n}-v_{i, h}^{n-1}\|.
\end{equation}  
 If $\|\sqrt{\mathbf{a}} \nabla v_{i, h}^{n}\|^2-\|\sqrt{\mathbf{a}} \nabla v_{i, h}^{n-1}\|^2\leq 0$, then  $\|\sqrt{\mathbf{a}} \nabla v_{i, h}^{n}\|^2-\|\sqrt{\mathbf{a}} \nabla v_{i, h}^{n-1}\|^2\leq Q\tau$ for any $Q>0$. Otherwise,  if $\|\sqrt{\mathbf{a}} \nabla v_{i, h}^{n}\|^2-\|\sqrt{\mathbf{a}} \nabla v_{i, h}^{n-1}\|^2> 0$,
we apply
\begin{equation}\label{jy15}
(v_{i, h}^{n-\frac{1}{2}}, v_{i,h}^{n})=\frac{1+(v_{i,h}^{n-1},v_{i,h}^{n})}{2}=\frac{\|v^{n}_{i, h}+v^{n-1}_{i, h}\|^2}{4}\geq 0
\end{equation}
to get from (\ref{s1}) that $\gamma^{-1} \|v^{n}_{i, h}-v^{n-1}_{i, h}\|^2 \leq  Q\tau \| v_{i, h}^{n}-v_{i, h}^{n-1}\|$, that is, 
$
 \|v^{n}_{i, h}-v^{n-1}_{i, h}\| \leq  Q\tau.
$
We combine this with
\begin{align}\label{Zjy1}
(v_{i, h}^{n-\frac{1}{2}}, v_{i, h}^{n})=(v_{i, h}^{n-\frac{1}{2}}, v_{i, h}^{n-\frac{1}{2}})=1- \frac{1-(v_{i, h}^{n},v_{i, h}^{n-1})}{2}=1-\frac{\|v^{n}_{i, h}-v^{n-1}_{i, h}\|^2}{4}
 \end{align}
to get $( v_{i, h}^{n-\frac{1}{2}}, v_{i, h}^{n})\geq 1/2$ for $\tau$ small enough. We invoke the above estimates in (\ref{s1}) to obtain
\begin{equation*}
\frac{\tau}{4}(\|\sqrt{\mathbf{a}} \nabla v_{i, h}^{n}\|^2-\|\sqrt{\mathbf{a}} \nabla v_{i, h}^{n-1}\|^2)\leq Q\tau^2, \text{ i.e., }\|\sqrt{\mathbf{a}} \nabla v_{i, h}^{n}\|^2-\|\sqrt{\mathbf{a}} \nabla v_{i, h}^{n-1}\|^2\leq Q\tau.
\end{equation*}  
Consequently,
in either case we have $\|\sqrt{\mathbf{a}}\nabla v_{i, h}^{n}\|^2-\|\sqrt{\mathbf{a}}\nabla v_{i, h}^{n-1}\|^2\leq Q\tau$. We sum this equation from $n = 1$ to $n^* \leq N$ and invoke \textit {Assumption A} to get
$$
a_0\|\nabla v_{i,h}^{n^*}\|^2 \leq \|\sqrt{\mathbf{a}} \nabla v_{i,h}^{n^*}\|^2 \leq \|\sqrt{\mathbf{a}}\nabla v_{i,h}^{0}\|^2+ Q\tau n^* \leq Q\|\nabla v_{i,0}\|^2+ QT\leq Q.
$$

To bound $\nabla u_h^{n}$, we take $\chi_1 =u_h^n-u_h^{n-1}$ in the first equation of the AUX scheme and use the boundedness of $\|u_h^n\|$ and $\|\nabla v_{i,h}^n\|$ to obtain
\begin{equation}\label{Bu}
\begin{aligned}
&\beta^{-1}\|u_h^n-u_h^{n-1}\|^2 + \tau\|\sqrt{\mathbf{a}} \nabla u_h^n\|^2 \leq \tau\|\sqrt{\mathbf{a}} \nabla u_h^n\|\cdot \|\sqrt{\mathbf{a}} \nabla u_h^{n-1}\| + Q\tau\|u_h^n-u_h^{n-1}\|\\
&\qquad +2\tau \sum_{i=1}^k\|\sqrt{\mathbf{a}}\nabla v_{i,h}^{n-1}\|\cdot\|\sqrt{\mathbf{a}}\nabla u_h^{n}\|\cdot\|u_h^n-u_h^{n-1}\|\\
&\quad \leq  \frac{\tau}{2}\|\sqrt{\mathbf{a}} \nabla u_h^n\|^2  +\frac{\tau}{2}\|\sqrt{\mathbf{a}}\nabla u_h^{n-1}\|^2+Q\tau^2\|\sqrt{\mathbf{a}} \nabla u_h^{n}\|^2+\beta ^{-1}\|u_h^n-u_h^{n-1}\|^2+Q\tau^2,
\end{aligned}
\end{equation}
which implies
$
	\|\sqrt{\mathbf{a}}\nabla u_h^n\|^2-\|\sqrt{\mathbf{a}}\nabla u_h^{n-1}\|^2\leq Q\tau\|\sqrt{\mathbf{a}}\nabla u_h^{n}\|^2+Q\tau.
$
We sum this equation with respect to $n$, then apply the Gronwall inequality and \textit{Assumption A} to get $a_0 \|\nabla u_h^n \|^2 \leq \|\sqrt{\mathbf{a}} \nabla u_h^n \|^2 \leq Q $, which completes the proof.
\end{proof}

\section{ Error estimates and index preservation}\label{er}
Define $e_u^{n}:=u(t_n)-u_h^n$ and $  e_{v_i}^{n}:=v_i(t_n)-v_{i,h}^n$.  Combining Theorem \ref{stable} and (\ref{s1}) we have $\|v^{n}_{i, h}-v^{n-1}_{i, h}\|\leq Q\sqrt{\tau}$. We invoke this in (\ref{Zjy1})
to derive
 \begin{align}\label{zjy20}
  ( v_{i, h}^{n-\frac{1}{2}}, v_{i, h}^{n-\frac{1}{2}})=1+O(\tau).
  \end{align}
Applying this result, we have
\begin{equation*}
	\begin{aligned}
&(v_i(t_n), v_i(t_n))(\mathbf{a} \nabla v_i(t_{n}), \nabla \chi_2)-(v_{i, h}^{n-\frac{1}{2}}, v_{i, h}^{n-\frac{1}{2}})(\mathbf{a} \nabla v^{n-\frac{1}{2}}_{i, h}, \nabla \chi_2)\\[0.05in]
& =(\mathbf{a} \nabla v_i(t_{n}), \nabla \chi_2)- (1+O(\tau))(\mathbf{a} \nabla v^{n-\frac{1}{2}}_{i, h}, \nabla \chi_2)\\[0.05in]
&=\frac{(\mathbf{a}(\nabla e_{v_i}^n+\nabla e_{v_i}^{n-1}), \nabla \chi_2)+ (\mathbf{a}(\nabla v_i(t_{n})-\nabla v_i(t_{n-1})), \nabla \chi_2) }{2}\!+\! O(\tau)(\mathbf{a}\nabla v^{n-\frac{1}{2}}_{i, h}, \nabla \chi_2).
	\end{aligned}
\end{equation*}
Subtracting the AUX scheme from (\ref{ref}) and using the above relation, we obtain 
\begin{equation}\label{error}
\left\{
\begin{array}{l}
\displaystyle  \beta^{-1}\big(\frac{e^n_u-e^{n-1}_u}{\tau}+R_u^n, \chi_1\big)  +  (\mathbf{a} \nabla e^{n}_u, \nabla \chi_1)  +  \mu_1^n  =   \mu_2^n  + \mu_3^n ,\\[0.1in]
\displaystyle \gamma^{-1}\big(\frac{ e^{n}_{v_i}-e^{n-1}_{v_i}}{\tau}+R_{v_i}^n, \chi_2\big)+\frac{1}{2}(\mathbf{a}(\nabla e_{v_i}^n+\nabla e_{v_i}^{n-1}), \nabla \chi_2)=:\sum_{m=1}^4\nu_{i,m}^n ,\end{array}
\right.
\end{equation}
for $1\leq i\leq k$, where
$ R_u^n=u_t(t_n)-\bar\partial_t u(t_n)$, $R_{v_i}^n=(v_{i})_t(t_n)-\bar\partial_t v_i(t_n)$
 and the truncation errors $\mu_1^n$--$\mu_3^n$ and $\nu_1^n$--$\nu_4^n$ are presented  in the Appendix due to lengthy expressions.

Then we derive estimates based on the \textit{Assumption A} and the condition (\ref{unless}) with $1\leq i\leq N$ (such that Theorems \ref{orthf}--\ref{stable} can be applied) and the regularity condition $u,v\in H^1(0,T;H^2)\cap H^2(0,T;L^2)$. Note that such regularity conditions imply that $u$ and $v$ belong to $ C([0,T];L^\infty)$, $C([0,T];H^2)$ and $ W^{1,\infty}(0,T;L^2)$ by Sobolev embedding, and $(\nabla v_i)_t\in L^2(0,T;H^1_0)\cap H^1(0,T;H^{-1})$, which, according to \cite[Section 5.9.2, Theorem 3]{Eva}, gives $(\nabla v_i)_t\in C([0,T];L^2)$. 
Furthermore, we decompose 
\begin{equation*}
	\begin{aligned}
u(t_n)- u_h^n &= u(t_n)-P u(t_n)+ P u(t_n)-u_h^n=: \eta_u^n+\xi_u^n,\\
v_i(t_n)- v_{i, h}^n & = v_i(t_n)-P v_i(t_n) +P v_i(t_n) -v_{i, h}^n=:\eta_{v_i}^n+\xi_{v_i}^n.
	\end{aligned}
\end{equation*}

 We first invoke these splittings and the estimates of $\mu_1^n$--$\mu_3^n$ (cf. Appendix) in the error equation of $u$ with $\chi_1=\xi_u^n$ to get
\begin{equation}\label{u1}
\begin{aligned}
& \beta^{-1}\|\xi_u^n\|^2+\tau\|\sqrt{\mathbf{a}} \nabla\xi_u^n\|^2 \leq \beta^{-1} \|\xi_u^{n-1}\|\cdot\|\xi_u^{n}\|+\tau\|\psi^n_u\|\cdot\|\xi_u^{n}\|+Q\tau^2 \|\xi_u^n\| \\
& \quad+Q\tau \|\sqrt{\mathbf{a}} \nabla \xi_u^{n}\| \cdot\|\xi_u^{n}\| +Q\tau\|e_u^{n-1}\|\cdot\|\xi_u^{n}\| + Q\tau\sum_{i=1}^k\| e_{v_i}^{n-1}\|\cdot\|\xi_u^{n}\|,
\end{aligned}
\end{equation}
where $\psi^n_u:=\beta^{-1}(\frac{\eta_u^n-\eta_u^{n-1}}{\tau}+R^n_u)$. Following the standard procedure (see e.g. (4.13)--(4.14) in \cite{ZhaZheZhu}) we obtain 
\begin{equation}\label{xiu}
\|\xi_u^{n}\| \leq  Q\tau\sum_{m=1}^{n}\sum_{i=1}^k \|\xi_{v_i}^{m-1}\|+Q\tau+Qh^2,~~1\leq n\leq N.
\end{equation}

We then invoke the estimates of truncation errors $\nu_{i,1}^n$--$\nu_{i,4}^n$ (cf. Appendix for details) in the error equation of $v_i$ with $\chi_2= \xi_{v_i}^{n-\frac{1}{2}}$ to get 
\begin{equation}\label{v2}
	\begin{aligned}
& \gamma^{-1}\|\xi_{v_i}^{n}\|^2+\tau\|\sqrt{\mathbf{a}} \nabla \xi_{v_i}^{n-\frac{1}{2}}\|^2 \leq \gamma^{-1}\|\xi_{v_i}^{n-1}\|^2+Q\tau\| \psi_{v_i}^n\|\cdot \|\xi_{v_i}^{n-\frac{1}{2}}\|\\
&\qquad + Q\tau\sum_{j=1}^k(\|e_{v_j}^{n-1}\|+\|e_{v_j}^{n}\|)\cdot \|\xi_{v_i}^{n-\frac{1}{2}}\|+Q\tau \|\sqrt{\mathbf{a}}\nabla \xi_{v_i}^{n-\frac{1}{2}}\|\cdot \|\xi_{v_i}^{n-\frac{1}{2}}\|\\
&\qquad +Q\tau^2\|\sqrt{\mathbf{a}}\nabla \xi_{v_i}^{n-\frac{1}{2}}\|+Q\tau^2\|\xi_{v_i}^{n-\frac{1}{2}}\|+Q\tau \|e_u^n\|\cdot \|\xi_{v_i}^{n-\frac{1}{2}}\|.\\[0.05in]
	\end{aligned}
\end{equation}
where $\psi^n_{v_i}=-\gamma^{-1}(\frac{\eta_v^n-\eta_v^{n-1}}{\tau}+R^n_v)$. By Young’s inequality, we have
\begin{equation*}
	\begin{aligned}
Q\tau \|\sqrt{\mathbf{a}}\nabla \xi_{v_i}^{n-\frac{1}{2}}\|\cdot \|\xi_{v_i}^{n-\frac{1}{2}}\|+Q\tau^2\|\sqrt{\mathbf{a}} \nabla \xi_{v_i}^{n-\frac{1}{2}}\|  \leq  \frac{\tau}{2} \|\sqrt{\mathbf{a}}\nabla \xi_{v_i}^{n-\frac{1}{2}}\|^2+ Q\tau \|\xi_{v_i}^{n-\frac{1}{2}}\|^2+Q\tau^3,
	\end{aligned}
\end{equation*}
and we combine this inequality with  (\ref{v2}) and sum over $i$ from $1$ to $k$ to obtain
\begin{align*}
&\gamma^{-1} \sum_{i=1}^k \|\xi_{v_i}^{n}\|^2 \leq \gamma^{-1} \sum_{i=1}^k \|\xi_{v_i}^{n-1}\|^2+Q\tau \sum_{i=1}^k \| \psi_{v_i}^n\|\cdot \|\xi_{v_i}^{n-\frac{1}{2}}\|+Q\tau \sum_{i=1}^k \|e_u^n\|\cdot \|\xi_{v_i}^{n-\frac{1}{2}}\|\\
&\quad + Q\tau \sum_{i=1}^k \sum_{j=1}^k(\|e_{v_j}^{n-1}\|+\|e_{v_j}^{n}\|)\cdot \|\xi_{v_i}^{n-\frac{1}{2}}\|+ Q\tau \sum_{i=1}^k \|\xi_{v_i}^{n-\frac{1}{2}}\|^2+ Q\tau^3\\
&\leq \gamma^{-1} \sum_{i=1}^k \|\xi_{v_i}^{n-1}\|^2 +Q\tau \sum_{i=1}^k ( \| \psi_{v_i}^n\|^2+ \|e_u^n\|^2+ \|e_{v_i}^{n-1}\|^2 +\|e_{v_i}^{n}\|^2+\|\xi_{v_i}^{n-\frac{1}{2}}\|^2)+Q\tau^3.
\end{align*}
We sum this equation from $n=1$ to $n^*\leq N$ and invoke the standard estimates $\tau \sum_{n=1}^{N}\|\psi^n_{v_i}\|^2\leq Q\|(v_i)_{tt}\|_{L^2(0,T;L^2)}^2\tau^2+Q\|(v_i)_t\|^2_{L^2(0,T;H^2)}h^4$,
the projection estimate (\ref{eta1}) and the estimate  (\ref{jy8}) on initial values to get
\begin{equation}\label{xiv}
\begin{aligned}
\sum_{i=1}^k \|\xi_{v_i}^{n^*}\|^2  \leq Q\tau\sum_{n=1}^{n^*} \|\xi_{u}^{n}\|^2 + Q\tau \sum_{n=1}^{n^*} \sum_{i=1}^k \|\xi_{v_i}^{n}\|^2+Q\tau^2+Q h^4.
\end{aligned}
\end{equation}
 Substituting (\ref{xiu}) into (\ref{xiv}) we derive that
\begin{equation*}
\begin{aligned}
 \sum_{i=1}^k \|\xi_{v_i}^{n^*}\|^2  & \leq  Q\tau^2\sum_{n=1}^{n^*} \sum_{m=1}^{n}\sum_{i=1}^k \|\xi_{v_i}^{m-1}\|^2+ Q\tau \sum_{n=1}^{n^*} \sum_{i=1}^k \|\xi_{v_i}^{n}\|^2 +Q\tau^2+Q h^4\\
 & \leq  Q\tau \sum_{n=1}^{n^*} \sum_{i=1}^k \|\xi_{v_i}^{n}\|^2+Q\tau^2+Q h^4.
\end{aligned}
\end{equation*}
Thus for $\tau$ small enough, an application of the Gronwall inequality leads to
\begin{equation*}
	\sum_{i=1}^k \|\xi_{v_i}^{n}\|^2 \leq Q\tau^2+Qh^4,\text{ that is, } \sum_{i=1}^k \|\xi_{v_i}^{n}\| \leq Q\sqrt{k}(\tau+h^2),~1\leq n\leq N,
\end{equation*}
and we invoke this in (\ref{xiu}) to get $\|\xi_u^n\|\leq Q(\tau+h^2)$. Combining these results with the projection estimate (\ref{eta1}) leads to the following theorem.
\begin{theorem}\label{thma}
Under Assumption A, the condition (\ref{unless}) for $1\leq n\leq N$ and the regularity condition $u,v_i\in H^1(0,T;H^2)\cap H^2(0,T;L^2)$ for $1\leq i\leq k$, the following error estimate  holds for the AUX scheme with  $\tau$ small enough 
$$\| u(t_n)- u_h^n\|+\sum_{i=1}^k\|v_i(t_n)- v_{i,h}^n\|\leq Q(\tau+h^2),~~1\leq n\leq N.$$
\end{theorem}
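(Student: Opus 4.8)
The plan is to prove the estimate by the standard finite-element error-splitting technique, writing $e_u^n=\eta_u^n+\xi_u^n$ and $e_{v_i}^n=\eta_{v_i}^n+\xi_{v_i}^n$ as already introduced, where the elliptic-projection parts $\eta_u^n,\eta_{v_i}^n$ are directly controlled at rate $O(h^2)$ by (\ref{eta1}). The whole task then reduces to bounding the purely discrete parts $\xi_u^n,\xi_{v_i}^n\in S_h$. The key inputs are the error equations (\ref{error}), obtained by subtracting the AUX scheme from the consistent semi-discretization (\ref{ref}), together with the orthonormality consequence (\ref{zjy20}), which reduces each awkward normalization factor $(v_{i,h}^{n-\frac{1}{2}},v_{i,h}^{n-\frac{1}{2}})$ to $1+O(\tau)$ and thereby turns it into a controllable $O(\tau)$ perturbation. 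Throughout, the cutoff nonlinearities of the AUX scheme supply uniform boundedness and a global Lipschitz constant, so every term built from $\widetilde{f}$ and $\widetilde{f'}$ is handled without any growth issue.

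First I would treat the $u$-equation by testing the first line of (\ref{error}) with $\chi_1=\xi_u^n$. Here the gradient stability of Theorem \ref{stable}, namely $\|\nabla u_h^n\|+\sum_i\|\nabla v_{i,h}^n\|\leq Q$, is essential: it allows the bilinear coupling terms (products of the gradients of $u_h^n$ and $v_{i,h}^{n-1}$ paired against $\xi_u^n$) to be bounded \emph{without} any $L^\infty$ control on $u_h^n$, which is precisely the assumption we wish to remove. After keeping the coercive diffusion term $\tau\|\sqrt{\mathbf a}\nabla\xi_u^n\|^2$ on the left and applying Young's inequality to the consistency/projection contribution $\psi_u^n$, this produces the recurrence (\ref{xiu}), in which $\|\xi_u^n\|$ is controlled by the accumulated $v_i$-errors $\tau\sum_{m=1}^n\sum_i\|\xi_{v_i}^{m-1}\|$ plus the expected $O(\tau+h^2)$ truncation.

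Next I would handle the $v_i$-equations by testing the second line of (\ref{error}) with $\chi_2=\xi_{v_i}^{n-\frac{1}{2}}$. The crucial structural feature is that the symmetric half-step diffusion term collapses to the nonnegative quantity $\|\sqrt{\mathbf a}\nabla\xi_{v_i}^{n-\frac{1}{2}}\|^2$, while the discrete time derivative telescopes into $\tfrac{1}{2\tau}(\|\xi_{v_i}^n\|^2-\|\xi_{v_i}^{n-1}\|^2)$; together these give the clean energy inequality (\ref{v2}), whose coercive gradient term can absorb the gradient cross-terms via Young's inequality. Summing over $i$ and then over $n$, and inserting the standard consistency bounds for $\psi_{v_i}^n$, the projection estimate (\ref{eta1}) and the initial-data estimate (\ref{jy8}), yields (\ref{xiv}), in which $\sum_i\|\xi_{v_i}^{n^*}\|^2$ is bounded by $Q\tau\sum_n\|\xi_u^n\|^2+Q\tau\sum_n\sum_i\|\xi_{v_i}^n\|^2+Q\tau^2+Qh^4$.

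The closure step is then to substitute (\ref{xiu}) into (\ref{xiv}): the double sum $\tau^2\sum_n\sum_m$ carries an extra $\tau$ factor and folds harmlessly into the Gronwall sum, leaving the self-referential inequality $\sum_i\|\xi_{v_i}^{n^*}\|^2\leq Q\tau\sum_n\sum_i\|\xi_{v_i}^n\|^2+Q\tau^2+Qh^4$. The discrete Gronwall inequality gives $\sum_i\|\xi_{v_i}^n\|^2\leq Q(\tau^2+h^4)$; back-substitution in (\ref{xiu}) gives $\|\xi_u^n\|\leq Q(\tau+h^2)$; and the triangle inequality with the projection bounds on $\eta_u^n,\eta_{v_i}^n$ finishes the argument. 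I expect the main obstacle to be the coupling rather than any isolated estimate: the $u$- and $v_i$-errors feed into one another, so the two recurrences must be arranged so that substitution produces a single Gronwall-closable inequality, and — more delicately — all gradient-nonlinearity cross-terms must be controlled using only the $H^1$ bounds of Theorem \ref{stable} together with the $1+O(\tau)$ factors from Theorem \ref{orthf} and the symmetric half-step discretization, which is exactly where the favorable structure of the scheme does the heavy lifting.
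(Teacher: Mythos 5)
Your proposal is correct and follows essentially the same route as the paper's own proof: the elliptic-projection splitting, testing the $u$-error equation with $\xi_u^n$ (using the gradient bounds of Theorem \ref{stable} in place of any $L^\infty$ assumption) to obtain (\ref{xiu}), testing the $v_i$-error equations with $\xi_{v_i}^{n-\frac{1}{2}}$ to obtain the telescoping energy inequality (\ref{v2}) and then (\ref{xiv}), and closing via substitution and the discrete Gronwall inequality. No substantive differences to report.
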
 

Based on the estimates for the AUX scheme, we can now give error estimates for the original scheme (\ref{app}). Since the numerical solutions of both the AUX scheme and the original scheme (\ref{app}) will appear in subsequent proofs, we reuse the notations $\tilde u_h^n$ and $\tilde v_{i,h}^n$ for the numerical solutions of the AUX scheme to avoid possible confusion.

\begin{theorem}\label{thm}
Under Assumption A, the condition (\ref{unless}) for $1\leq n\leq N$, the regularity condition $u,v_i\in H^1(0,T;H^2)\cap H^2(0,T;L^2)$ for $1\leq i\leq k$ and the time-step condition $\tau=o(h^{d/2})$, the following error estimate  holds for the original scheme (\ref{app}) with  $\tau$ and $h$ small enough 
$$\| u(t_n)- u_h^n\|+\sum_{i=1}^k\|v_i(t_n)- v_{i,h}^n\|\leq Q(\tau+h^2),~~1\leq n\leq N.$$
\end{theorem}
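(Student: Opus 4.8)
The plan is to transfer the error estimate already proved for the AUX scheme (Theorem~\ref{thma}) to the original scheme (\ref{app}) by showing that, under the time-step restriction $\tau=o(h^{d/2})$, the numerical solutions of the two schemes actually coincide. The key observation is that the two schemes differ only through the nonlinear terms $f,f'$ versus their cutoffs $\tilde f,\widetilde{f'}$, and these agree on the interval $[-\|u\|_{L^\infty(0,T;L^\infty)}-1,\|u\|_{L^\infty(0,T;L^\infty)}+1]$. Thus if I can prove that the original numerical solution $u_h^n$ stays (pointwise) within this interval for all $n$, then $(u_h^n,v_{i,h}^n)$ satisfies exactly the AUX scheme, whence the two solutions are identical and Theorem~\ref{thma} applies verbatim to give the stated bound.

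\medskip

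\noindent\textbf{Key steps.} First I would set up an induction on $n$: assume that for all time levels up to $n-1$ the original solution coincides with the AUX solution, so that in particular $\|u_h^{m}-\tilde u_h^{m}\|=0$ and the AUX error estimate $\|u(t_m)-u_h^m\|\leq Q(\tau+h^2)$ holds for $m\le n-1$. At level $n$, since the inputs $u_h^{n-1},v_{i,h}^{n-1}$ to the two schemes agree, and the first equation of (\ref{app}) computes $u_h^n$ using $f$ evaluated only at $u_h^{n-1}$ (not $u_h^n$), the update for $u_h^n$ is \emph{identical} to the AUX update provided $u_h^{n-1}$ lies in the cutoff interval; similarly the $v$-update at level $n$ uses $f'(u_h^n)$, so I then need $u_h^n$ itself to lie in the interval. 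The crux is therefore an $L^\infty$ bound: I must show $\|u_h^n\|_{L^\infty}\le \|u\|_{L^\infty(0,T;L^\infty)}+1$. To obtain this, I would combine the $L^2$ error estimate $\|u(t_n)-u_h^n\|\le Q(\tau+h^2)$ (valid because the solutions coincide through this step) with the inverse inequality $\|\chi_h\|_{L^\infty}\le Qh^{-d/2}\|\chi_h\|$ on $S_h$, applied to $\xi_u^n=Pu(t_n)-u_h^n$:
\begin{equation*}
\|u_h^n\|_{L^\infty}\le \|u(t_n)\|_{L^\infty}+\|\eta_u^n\|_{L^\infty}+Qh^{-d/2}\|\xi_u^n\|\le \|u\|_{L^\infty(0,T;L^\infty)}+Qh^{-d/2}(\tau+h^2).
\end{equation*}
Under $\tau=o(h^{d/2})$ the term $h^{-d/2}(\tau+h^2)=o(1)+Qh^{2-d/2}$ tends to $0$ for $1\le d\le 3$, so it is bounded by $1$ for $h,\tau$ small enough, closing the induction and establishing that $u_h^n$ never leaves the region where $f=\tilde f$ and $f'=\widetilde{f'}$.

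\medskip

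\noindent\textbf{Main obstacle.} The delicate point is the logical circularity inherent in this bootstrap: the $L^2$ error estimate from Theorem~\ref{thma} is only known to hold for the \emph{AUX} solution, yet I want to use it to bound the \emph{original} solution before I have shown the two coincide. This is resolved precisely by the induction structure—coincidence at levels $\le n-1$ makes the AUX estimate applicable to the original solution up to that level, and the explicit (rather than implicit) treatment of the nonlinearity in the $u$-equation ensures the level-$n$ update matches without needing any a priori information about $u_h^n$ beyond what the inverse inequality supplies. I would also need to verify the $\eta_u^n$ term is harmless, using $\|\eta_u^n\|_{L^\infty}\le \|u(t_n)-Pu(t_n)\|_{L^\infty}\to 0$ via the $L^\infty$ estimate for the elliptic projection together with the regularity $u\in C([0,T];H^2)$ and the Sobolev embedding. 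Once the pointwise containment is secured, the conclusion is immediate: the original scheme and the AUX scheme generate the same iterates, and the error bound of Theorem~\ref{thma} transfers directly to (\ref{app}).
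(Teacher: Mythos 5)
Your proposal is correct and follows essentially the same route as the paper: both transfer Theorem~\ref{thma} to the original scheme by showing the numerical solution never leaves the region where $f=\tilde f$ and $f'=\widetilde{f'}$, using the AUX $L^2$ error bound together with the inverse inequality $\|\chi_h\|_{L^\infty}\le Qh^{-d/2}\|\chi_h\|$ and the condition $\tau=o(h^{d/2})$ to make the $L^\infty$ deviation vanish. The only differences are cosmetic: the paper compares against the interpolant $I_hu$ rather than the elliptic projection and argues globally that the AUX scheme "degenerates" to (\ref{app}), whereas your level-by-level induction makes that coincidence explicit (arguably more carefully than the paper does).
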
 
\begin{remark}
If we only consider the spatial semi-discrete scheme, we do not need the  time-step condition $\tau=o(h^{d/2})$ in error estimates.
\end{remark}
\begin{proof}
Based on the interpolation estimates $\|q-I_h q\|_{L^\infty}\leq Qh^{2-d/2}\|q\|_{H^2}$ and $\|q-I_h q\|\leq Qh^{2}\|q\|_{H^2}$ for $q\in H^2$ and the inverse estimate $\|q_h\|_{L^\infty}\leq Qh^{-d/2}\|q_h\|$ for $q_h\in S_h$ \cite{Bre5}, we have
\begin{equation*}
	\begin{aligned}
	\displaystyle   \|u-\tilde u_h^n\|_{L^\infty} & \leq \| u-I_h u\|_{L^\infty}+\|I_h u- \tilde u_h^n\|_{L^\infty}\leq Qh^{2-d/2}\|u\|_{H^2}+Qh^{-d/2}\|I_h u- \tilde u_h^n\|\\[0.05in]
	\displaystyle  &\leq Qh^{2-d/2}+ Qh^{-d/2}\big(\|I_h u- u\|+\| u- \tilde u_h^n\|\big)\\
	& \leq Qh^{2-d/2}+Qh^{-d/2}(h^2+\tau)\leq   Qh^{2-d/2}+Qh^{-d/2}o(h^{d/2}).
	\end{aligned}
\end{equation*}
As the right-hand side of the above equation tends to $0$ as $h\rightarrow 0^+$, we find that $\|\tilde u_h^n\|_{L^\infty}\leq \|u\|_{L^\infty(0,T;L^\infty)}+1$ for $h$ small enough. In this case, the $\tilde f$ and $\widetilde{f'}$ are consistent with $f$ and $f'$, respectively, and the AUX scheme degenerates to the original scheme (\ref{app}), which, together with  Theorem \ref{thma}, completes the proof. 
\end{proof}

Based on the error estimate, we could give a theoretical support for the index-preservation issue of the scheme.
\begin{corollary}\label{cor1}
Under the conditions of Theorem \ref{thm}, if $\lim\limits_{t\rightarrow \infty}u(t)=u^*$ under the $L^2$ sense for an index-k saddle point $u^*$, then for any $\delta>0$, the approximation $\|u_h^N-u^*\|\leq \delta$ holds for $T$ large enough and $\tau$ and $h$ small enough.

In addition, if for any $\hat u\in B_\delta(u^*):=\{\hat u\in L^2:\,\|\hat u-u^*\|\leq \delta\}$ for some $\delta>0$ the eigenvalues of $-\nabla\cdot(\mathbf{a}(x)\nabla)-f'(\hat u)$ and $-\nabla\cdot(\mathbf{a}(x)\nabla)-f'( u^*)$ under zero Dirichlet boundary conditions have the same signs, then the eigenvalues of  $-\nabla\cdot(\mathbf{a}(x)\nabla)-f'(u_h^N)$ and $-\nabla\cdot(\mathbf{a}(x)\nabla)-f'( u^*)$ also have the same signs for $T$ large enough and $\tau$, $h$ small enough, that is, the scheme (\ref{app}) preserves the indices of saddle  solutions of (\ref{elliptic}). 
\end{corollary}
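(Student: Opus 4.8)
The plan is to combine the finite-time error estimate of Theorem~\ref{thm}, evaluated at the terminal time $t_N=T$, with the assumed long-time convergence $u(t)\to u^*$ of the continuous dynamics, and then to read off index preservation directly from the sign-matching hypothesis.

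For the first assertion I would proceed by the triangle-inequality split
\begin{equation*}
\|u_h^N-u^*\|\leq \|u_h^N-u(T)\|+\|u(T)-u^*\|.
\end{equation*}
The first term is controlled by Theorem~\ref{thm} at $n=N$, giving $\|u_h^N-u(T)\|\leq Q(\tau+h^2)$, while the second is governed by the hypothesis $\lim_{t\to\infty}u(t)=u^*$ in $L^2$. The essential point, and the step that must be sequenced carefully, is that the constant $Q$ in Theorem~\ref{thm} depends on $T$ (through the Gronwall factor and the solution norms on $[0,T]$). I would therefore arrange the quantifiers in the right order: given $\delta>0$, first choose $T$ large enough that $\|u(T)-u^*\|\leq \delta/2$; this freezes the value of $Q=Q(T)$ together with the regularity norms of $u,v_i$ on the now-finite interval $[0,T]$. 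Only afterwards do I take $\tau,h$ small enough (subject to $\tau=o(h^{d/2})$) so that $Q(\tau+h^2)\leq \delta/2$, which yields $\|u_h^N-u^*\|\leq\delta$.

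For the index-preservation claim I would invoke the first assertion with the specific $\delta>0$ furnished by the eigenvalue hypothesis. By that assertion, for $T$ large and $\tau,h$ small the numerical terminal state satisfies $\|u_h^N-u^*\|\leq\delta$, hence $u_h^N\in B_\delta(u^*)$. Taking $\hat u=u_h^N$ in the hypothesis then immediately gives that the eigenvalues of $-\nabla\cdot(\mathbf{a}\nabla)-f'(u_h^N)$ and of $-\nabla\cdot(\mathbf{a}\nabla)-f'(u^*)$, both under zero Dirichlet conditions, have the same signs; since $u^*$ is an index-$k$ saddle point, the number of negative eigenvalues, i.e.\ the Morse index, coincides for the computed state and the target.

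The main obstacle is not any single estimate but the correct handling of the order of limits: because $Q$ grows as $T\to\infty$, one cannot let $T\to\infty$ and $\tau,h\to 0$ simultaneously. The argument succeeds precisely because $T$ is fixed first using only the continuous-level convergence, after which $Q(T)$ is a finite constant and the discretization error can be driven below $\delta/2$. I would also emphasize that the regularity requirement $u,v_i\in H^1(0,T;H^2)\cap H^2(0,T;L^2)$ of Theorem~\ref{thm} is imposed on each finite window $[0,T]$, which is fully consistent with this fixed-$T$ strategy.
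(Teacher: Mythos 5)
Your proposal is correct and follows essentially the same route as the paper: first fix $T=T_0$ so that $\|u(T_0)-u^*\|\leq\delta/2$ using the continuous-level convergence, then apply Theorem \ref{thm} on $[0,T_0]$ to make $\|u_h^N-u(T_0)\|\leq Q(\tau+h^2)\leq\delta/2$, and finally take $\hat u=u_h^N\in B_\delta(u^*)$ in the eigenvalue hypothesis. Your explicit discussion of the quantifier ordering (freezing $Q(T)$ before shrinking $\tau,h$) makes precise what the paper leaves implicit, but the argument is the same.
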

\begin{proof}
As $\lim\limits_{t\rightarrow \infty}u(t)=u^*$ under the $L^2$ sense, there exists a $T_0>0$ such that $\|u(T_0)-u^*\|\leq \delta/2$.  According to Theorem \ref{thm} with $T=T_0$,  we have $\|u_h^N- u(T_0)\|\leq Q (\tau + h^2)\leq \delta/2$ for $\tau$ and $h$ small enough. Thus we have $\|u_h^N-u^*\|\leq \delta$. The second statement of the theorem is a direct consequence of this estimate.
\end{proof}
\begin{remark}
The condition ``for any $\hat u\in B_\delta(u^*)$ for some $\delta>0$ the eigenvalues of $-\nabla\cdot(\mathbf{a}(x)\nabla)-f'(\hat u)$ and $-\nabla\cdot(\mathbf{a}(x)\nabla)-f'( u^*)$ under zero Dirichlet boundary conditions have the same signs'' in Corollary \ref{cor1} has been rigorously justified for a wide class of one-dimensional problems in \cite[Remark 3.3]{ZhaZheZhu}, which suggests the reasonableness of such assumption. 
Furthermore, Corollary \ref{cor1} implies that $u_h^N$ could approximate $u^*$ with an arbitrarily small error for $T$ sufficiently large and $\tau$ and $h$ small enough, which justifies the sufficiency of considering (\ref{SD}) on a finite interval.
\end{remark}
\section{Extension to advection-reaction-diffusion model}
We extend the preceding studies to the semilinear advection-reaction-diffusion equation 
\begin{equation}\label{RDC}
\mathcal Lu+f(u):=	\nabla\cdot(\mathbf{a}(x)\nabla u) + \mathbf{b}(x)\cdot\nabla u + c(x)u+f(u)=0,~x\in \Omega;~u=0,~x\in \partial \Omega,
\end{equation}
for some $ \mathbf{b}(x)\in\mathbb R^d$ and $ c(x)$ such that $\|\mathbf{b}\|_{L^\infty}+\|c\|_{L^\infty}\leq Q$. Then the corresponding index-$k$ saddle dynamics for locating index-$k$ saddle points reads
\begin{equation}\label{RSD}
\left\{\begin{array}{l}
\ds  u_t=\beta\Big(\mathcal Lu+f(u) - 2\sum_{i=1}^k v_i\big (v_i,\mathcal Lu+f(u) \big)\Big),\\
\ds (v_i)_t =\gamma\Big(\mathcal Lv_i+f'(u)v_i -\sum_{j=1}^{k}v_j(v_j, \mathcal Lv_i+f'(u)v_i )\Big),~~1\leq i\leq k,
    \end{array}
    \right.
\end{equation}
for $x\in\Omega$ and $t>0$, equipped with initial values $u_0$ and $\{v_{i,0}\}_{i=1}^k$ and zero boundary conditions. Similar to the proof of Theorem \ref{orth}, the following orthonormality holds.
 \begin{theorem}\label{orth2}
If the initial values $\{v_{i,0}\}_{i=1}^k$ of (\ref{RSD}) satisfy $(v_{i,0},v_{j,0})=\delta_{ij}$ for $ 1\leq i,j \leq k$, then $(v_{i},v_{j})=\delta_{ij}$ for $1\leq i,j \leq k$ for any $t>0$.
\end{theorem}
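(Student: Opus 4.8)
The plan is to reproduce the structure of the proof of Theorem~\ref{orth}, carefully tracking the additional advection contribution. I would introduce the symmetric matrix $\mathbf{H}(t)\in\mathbb{R}^{k\times k}$ with entries $h_{ij}:=(v_i,v_j)-\delta_{ij}$, which satisfies $\mathbf{H}(0)=0$ by hypothesis, together with the matrix $\mathbf{M}(t)\in\mathbb{R}^{k\times k}$ with entries $m_{ij}:=(v_i,\mathcal{L}v_j+f'(u)v_j)$. Differentiating $(v_i,v_j)$ in time and substituting the evolution equation for $v_i$ from (\ref{RSD}) exactly as in Theorem~\ref{orth}, the self-interaction sum cancels the diagonal contribution $m_{ji}$ and leaves a bilinear expression in the entries of $\mathbf{M}$ and $\mathbf{H}$.

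The crucial difference from Theorem~\ref{orth} is that $\mathbf{M}$ is no longer symmetric. Writing $\mathcal{L}v_j=\nabla\cdot(\mathbf{a}\nabla v_j)+\mathbf{b}\cdot\nabla v_j+c v_j$, the diffusion term $(v_i,\nabla\cdot(\mathbf{a}\nabla v_j))=-(\mathbf{a}\nabla v_i,\nabla v_j)$ (using the zero boundary condition and $\mathbf{a}=\mathbf{a}^\top$), the reaction term $(v_i,f'(u)v_j)$, and the potential term $(v_i,cv_j)$ are all symmetric in $i$ and $j$, whereas the advection term $(v_i,\mathbf{b}\cdot\nabla v_j)=-(\mathbf{b}\cdot\nabla v_i,v_j)-((\nabla\cdot\mathbf{b})v_i,v_j)$ is not. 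Carrying out the derivative computation with this in mind, the first half $(v_{i,t},v_j)$ produces $-\gamma(\mathbf{M}^\top\mathbf{H})_{ij}$ and the second half $(v_i,v_{j,t})$ produces $-\gamma(\mathbf{H}\mathbf{M})_{ij}$, so that $(v_i,v_j)_t=-\gamma(\mathbf{M}^\top\mathbf{H}+\mathbf{H}\mathbf{M})_{ij}$, that is, the matrix identity
\begin{equation*}
\frac{d\mathbf{H}(t)}{dt}+\gamma\big(\mathbf{M}^\top(t)\mathbf{H}(t)+\mathbf{H}(t)\mathbf{M}(t)\big)=0,\qquad \mathbf{H}(0)=0,
\end{equation*}
which reduces to the equation in Theorem~\ref{orth} precisely when $\mathbf{b}\equiv 0$.

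I would then integrate this identity in time and apply the matrix norm $\|\cdot\|$ from the Preliminaries, using its submultiplicativity together with the fact that this norm is the sum of absolute entries and is therefore transpose-invariant, $\|\mathbf{M}^\top\|=\|\mathbf{M}\|$. This gives $\|\mathbf{H}(t)\|\leq 2\gamma\int_0^t\|\mathbf{M}(s)\|\,\|\mathbf{H}(s)\|\,ds$, and the Gronwall inequality forces $\mathbf{H}(t)\equiv 0$, establishing $(v_i,v_j)=\delta_{ij}$ for all $t>0$. The only genuine obstacle is the loss of symmetry of $\mathbf{M}$ induced by the advection operator $\mathbf{b}\cdot\nabla$; I expect it to be harmless, since the transpose-invariance of the chosen matrix norm absorbs the distinction between $\mathbf{M}^\top$ and $\mathbf{M}$ in the Gronwall step, and the entries $m_{ij}$ remain bounded on $[0,T]$ under the standing regularity of the solution together with $\|\mathbf{b}\|_{L^\infty}+\|c\|_{L^\infty}\leq Q$, exactly as in the pure-diffusion case.
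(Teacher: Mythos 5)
Your proposal is correct and follows essentially the same route the paper intends, namely repeating the proof of Theorem~\ref{orth} (the paper itself only states that the result holds ``similar to the proof of Theorem~\ref{orth}''). You correctly identify and resolve the one genuine subtlety---the advection term destroys the symmetry of $\mathbf{M}$, so the matrix ODE becomes $\mathbf{H}_t+\gamma(\mathbf{M}^\top\mathbf{H}+\mathbf{H}\mathbf{M})=0$---and the transpose-invariance of the entrywise norm makes the Gronwall step go through unchanged.
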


To obtain the retraction-free orthonormality-preserving scheme of (\ref{RSD}), we  denote $\mathcal F(u_h^{n-1}):= \mathbf{b}(x)\cdot\nabla u_h^{n-1} + c(x)u_h^{n-1}+ f(u_h^{n-1})$ and $\mathcal G^n:= \mathbf{b}(x)\cdot\nabla + c(x)+ f'(u_h^n)$. Then the scheme is given in an analogous manner as (\ref{app}): find $\{u_h^n, v_{i,h}^n\}_{n=1,i=1}^{N,k}$ such that for any $\chi_1,\chi_2 \in S_h$
\begin{equation}\left\{
\begin{array}{l}
 \ds \beta ^{-1}\big(\frac{u^n_h-u^{n-1}_h}{\tau}, \chi_1\big)+(\mathbf{a} \nabla u^{n}_h, \nabla \chi_1)-2\sum_{i=1}^k(v^{n-1}_{i, h}, \chi_1)(\mathbf{a}\nabla v^{n-1}_{i, h}, \nabla u^{n}_h)\\[0.0in]
\ds\quad  =\big(\mathcal F(u_h^{n-1}), \chi_1\big) -2\sum_{i=1}^{k}(v^{n-1}_{i, h},\chi_1)\big(\mathcal F(u_h^{n-1}), v^{n-1}_{i, h}\big),\\[0.2in]
\ds \gamma^{-1}\big(\frac{ v^{n}_{i, h}-v^{n-1}_{i, h}}{\tau}, \chi_2\big)=-(v_{i, h}^{n-\frac{1}{2}}, v_{i, h}^{n-\frac{1}{2}})\big [(\mathbf{a}\nabla v^{n-\frac{1}{2}}_{i, h}, \nabla \chi_2)- (\mathcal G^nv_{i, h}^{n-\frac{1}{2}}, \chi_2)\big]\\[0.15in]
\ds \quad +(v_{i, h}^{n-\frac{1}{2}}, \chi_2)\big[(\mathbf{a}\nabla v_{i, h}^{n-\frac{1}{2}}, \nabla v_{i, h}^{n-\frac{1}{2}})-( \mathcal G^n v_{i, h}^{n-\frac{1}{2}}, v_{i, h}^{n-\frac{1}{2}})\big]\label{Rapp}
\\[0.15in]
\ds \quad +\sum_{l< i}(v_{i, h}^{n-\frac{1}{2}}, v_{i, h}^{n-\frac{1}{2}})(v_{l,h}^{n}, \chi_2)\big[(\mathbf{a}\nabla v_{i, h}^{n-\frac{1}{2}}, \nabla v_{l, h}^{n})-(\mathcal G^n v_{i, h}^{n-\frac{1}{2}}, v_{l, h}^{n})\big]\\[0.15in]
\ds \quad +\sum_{l> i}(v_{i, h}^{n-\frac{1}{2}}, v_{i, h}^{n-\frac{1}{2}})(v_{l,h}^{n-1}, \chi_2)\big[(\mathbf{a}\nabla v_{i, h}^{n-\frac{1}{2}}, \nabla v_{l, h}^{n-1})-(\mathcal G^n v_{i, h}^{n-\frac{1}{2}}, v_{l, h}^{n-1})\big],
\end{array}
\right.
\end{equation}
where the initial values for scheme (\ref{Rapp}) are the same as those for (\ref{app}). 

The orthonormality of the scheme (\ref{Rapp}) could be proved in the same manner as (\ref{app}). Thus we only state the result.
\begin{theorem}\label{orthR}
	Suppose  for $1\leq n\leq N$
	\begin{equation}\label{unless2}
(\tau \gamma)^{-1} \neq \frac{1}{2}\big[(\mathbf{a}\nabla v_{i, h}^{n-\frac{1}{2}}, \nabla v_{i, h}^{n-\frac{1}{2}})-( \mathcal G^n v_{i, h}^{n-\frac{1}{2}}, v_{i, h}^{n-\frac{1}{2}})\big],~~1\leq i\leq k,
\end{equation}
	then the numerical solutions of the scheme (\ref{Rapp}) satisfy $(v_{i, h}^n,v_{j, h}^n)=\delta_{ij}$ for $1\leq i,j \leq k$ and $ 1\leq n\leq N$.
\end{theorem}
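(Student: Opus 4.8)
The plan is to reproduce the double-induction argument used for Theorem~\ref{orthf}, since the scheme (\ref{Rapp}) shares the same algebraic skeleton as (\ref{app}): the only change is that every bracket of the form $(\mathbf{a}\nabla v_{i,h}^{n-\frac{1}{2}}, \nabla w) - (f'(u_h^n) v_{i,h}^{n-\frac{1}{2}}, w)$ is replaced by $(\mathbf{a}\nabla v_{i,h}^{n-\frac{1}{2}}, \nabla w) - (\mathcal{G}^n v_{i,h}^{n-\frac{1}{2}}, w)$, and the nondegeneracy condition (\ref{unless}) is replaced by (\ref{unless2}). First I would establish the analog of Lemma~\ref{jump}, namely that under the induction hypothesis $(v_{i,h}^{n-1}, v_{j,h}^{n-1}) = \delta_{ij}$ and (\ref{unless2}), one has $(v_{p,h}^n, v_{q,h}^{n-1}) = 0$ for $1 \le p < q \le k$. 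This follows by the same inner induction on $p$: testing the $p$-th equation of (\ref{Rapp}) against $\chi_2 = v_{q,h}^{n-1}$, the orthonormality at level $n-1$ and the induction hypothesis annihilate every coupling sum, leaving
\[ (\tau\gamma)^{-1}(v_{p,h}^n, v_{q,h}^{n-1}) = \frac{1}{2}(v_{p,h}^n, v_{q,h}^{n-1})\big[(\mathbf{a}\nabla v_{p,h}^{n-\frac{1}{2}}, \nabla v_{p,h}^{n-\frac{1}{2}}) - (\mathcal{G}^n v_{p,h}^{n-\frac{1}{2}}, v_{p,h}^{n-\frac{1}{2}})\big], \]
whence (\ref{unless2}) forces the inner product to vanish.

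With this lemma in hand, I would run the outer induction on $n$ exactly as in the proof of Theorem~\ref{orthf}. Testing the $i$-th equation against $\chi_2 = v_{j,h}^n$ for $j < i$ and invoking the lemma together with the relations already obtained in the current step collapses the right-hand side to a single self-coupling term proportional to $(v_{i,h}^n, v_{j,h}^n)$ times the bracket constrained by (\ref{unless2}); the nondegeneracy condition then gives $(v_{i,h}^n, v_{j,h}^n) = 0$. The unit-norm step is obtained by testing against $\chi_2 = v_{i,h}^{n-\frac{1}{2}}$: the two ``diagonal'' brackets cancel, and every remaining coupling term carries a factor $(v_{l,h}^n, v_{i,h}^{n-\frac{1}{2}})$ or $(v_{l,h}^{n-1}, v_{i,h}^{n-\frac{1}{2}})$ that vanishes by the lemma and the orthogonality just established, leaving $\gamma^{-1}(v_{i,h}^n - v_{i,h}^{n-1}, v_{i,h}^{n-\frac{1}{2}}) = 0$, i.e.\ $\|v_{i,h}^n\|^2 = \|v_{i,h}^{n-1}\|^2 = 1$.

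The point worth checking --- and the only place where one must be alert rather than merely copying --- is that the advection term $\mathbf{b}\cdot\nabla u$ renders $\mathcal{G}^n$ non-self-adjoint, so the symmetry that underlies the continuous-time argument of Theorem~\ref{orth} (via the symmetric matrix $\mathbf{M}$) is lost. The main obstacle is therefore to confirm that this loss is immaterial for the fully discrete proof. It is: the cancellations above never transpose a bilinear form or exploit $(\mathcal{G}^n v, w) = (v, \mathcal{G}^n w)$. Each unwanted term is killed purely by a scalar orthogonality coefficient multiplying the bracket, and the surviving self-term is precisely the one excluded by (\ref{unless2}). Hence no symmetry of $\mathcal{G}^n$, and in particular no structural property of $\mathbf{b}\cdot\nabla + c$, is required, and the transcription of the proof of Theorem~\ref{orthf} goes through verbatim.
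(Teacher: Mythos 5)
Your proposal is correct and is exactly the argument the paper intends: the paper omits the proof of Theorem~\ref{orthR}, stating only that it ``could be proved in the same manner as (\ref{app})'', and your transcription of Lemma~\ref{jump} and the double induction of Theorem~\ref{orthf} with $f'(u_h^n)$ replaced by $\mathcal{G}^n$ is that proof. Your explicit check that the non-self-adjointness of $\mathcal{G}^n$ is harmless --- because every cancellation is between identical scalar brackets killed by orthogonality coefficients, never by transposing a bilinear form --- correctly identifies the only point requiring care and resolves it.
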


For the sake of further analysis, we again introduce
 an auxiliary scheme (denoted by \textit{AUX* scheme} in the rest of the work) for the original scheme (\ref{Rapp}), which coincides with (\ref{Rapp}) with $\mathcal F(u_h^{n-1})$ and $\mathcal G^n$ replaced by $\widetilde{\mathcal F}(u_h^{n-1}):= \mathbf{b}(x)\cdot\nabla u_h^{n-1} + c(x)u_h^{n-1}+ \tilde f(u_h^{n-1})$ and $\widetilde{\mathcal G}^n:= \mathbf{b}(x)\cdot\nabla + c(x)+ \widetilde{f'}(u_h^n)$, respectively, and $\{u_h^n, v_{i,h}^n\}_{n=1}^{N}$ replaced by $\{\tilde u_h^n, \tilde v_{i,h}^n\}_{n=1}^{N}$ for $1\leq i\leq k$. Here $\tilde f$ and $\widetilde{f'}$ denote the cutoff functions  presented in Section \ref{sec4}.

Now we are in the position to prove gradient estimates of the numerical solutions for the AUX* scheme.

\begin{theorem}\label{stable2}
Under the Assumption A and the condition (\ref{unless2}) with $1\leq n\leq N$, we have $ \|\nabla u_h^n\|+\sum_{i=1}^k\|\nabla v_{i, h}^n\| \leq Q$ for the AUX* scheme for $\tau$ and $h$ small enough.
\end{theorem}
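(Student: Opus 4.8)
The plan is to follow the proof of Theorem~\ref{stable} almost verbatim, isolating the two genuinely new contributions carried by $\mathcal{G}^n=\mathbf{b}\cdot\nabla+c+\widetilde{f'}(u_h^n)$, namely the zeroth-order reaction $c$ and the first-order advection $\mathbf{b}\cdot\nabla$. I would treat $\{v_{i,h}^n\}$ first. Taking $\chi_2=v_{i,h}^{n}-v_{i,h}^{n-1}$ in the second equation of the AUX* scheme and invoking Theorem~\ref{orthR} together with the counterpart of Lemma~\ref{jump} for (\ref{Rapp}) eliminates every cross term with $l\neq i$ exactly as in the pure case, since $(v_{l,h}^{n},v_{i,h}^{n}-v_{i,h}^{n-1})=0$ for $l<i$ and $(v_{l,h}^{n-1},v_{i,h}^{n}-v_{i,h}^{n-1})=0$ for $l>i$. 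Using $\nabla v_{i,h}^{n-\frac{1}{2}}=\frac{1}{2}(\nabla v_{i,h}^{n}+\nabla v_{i,h}^{n-1})$ and $(v_{i,h}^{n-\frac{1}{2}},v_{i,h}^{n-1})=(v_{i,h}^{n-\frac{1}{2}},v_{i,h}^{n})=:\theta$, the identity collapses to the exact analogue of (\ref{s1}),
\begin{equation*}
\gamma^{-1}\|v_{i,h}^{n}-v_{i,h}^{n-1}\|^2=\frac{\tau\theta}{2}\big(\|\sqrt{\mathbf{a}}\nabla v_{i,h}^{n-1}\|^2-\|\sqrt{\mathbf{a}}\nabla v_{i,h}^{n}\|^2\big)+\tau\theta\big(\mathcal{G}^n v_{i,h}^{n-\frac{1}{2}},v_{i,h}^{n}-v_{i,h}^{n-1}\big),
\end{equation*}
i.e. the pure-case term $\widetilde{f'}(u_h^n)v_{i,h}^{n-\frac{1}{2}}$ is replaced by $\mathcal{G}^n v_{i,h}^{n-\frac{1}{2}}$.

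In this identity the $c$-part is controlled by $Q\tau\|v_{i,h}^{n}-v_{i,h}^{n-1}\|$ exactly as the reaction term in Theorem~\ref{stable}, because $\|c\|_{L^\infty}\le Q$ and $\|v_{i,h}^{n-\frac{1}{2}}\|\le1$. The genuinely new obstacle—absent in the pure semilinear case, where the corresponding term is zeroth order—is the advection contribution $\tau\theta(\mathbf{b}\cdot\nabla v_{i,h}^{n-\frac{1}{2}},v_{i,h}^{n}-v_{i,h}^{n-1})$, which carries the still-uncontrolled current-step gradient through $\|\mathbf{b}\|_{L^\infty}\|\nabla v_{i,h}^{n-\frac{1}{2}}\|\,\|v_{i,h}^{n}-v_{i,h}^{n-1}\|$. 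I would split it by Young's inequality, absorbing one half of $\gamma^{-1}\|v_{i,h}^{n}-v_{i,h}^{n-1}\|^2$ on the left and leaving a remainder $Q\tau^2\|\nabla v_{i,h}^{n-\frac{1}{2}}\|^2$. To restore the threshold $\theta\ge\frac{1}{2}$ that keeps the diffusion coefficient positive (via (\ref{jy15})--(\ref{Zjy1})), I would drop the nonnegative diffusion difference in the increasing case to get $\|v_{i,h}^{n}-v_{i,h}^{n-1}\|^2\le Q\tau^2(1+\|\nabla v_{i,h}^{n-\frac{1}{2}}\|^2)$ and then apply the inverse estimate $\|\nabla v_{i,h}^{n-\frac{1}{2}}\|\le Qh^{-1}\|v_{i,h}^{n-\frac{1}{2}}\|\le Qh^{-1}$ to conclude $\|v_{i,h}^{n}-v_{i,h}^{n-1}\|\le Q\tau h^{-1}$. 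I expect this to be the main difficulty of the whole argument: it forces a mild mesh restriction (e.g. $\tau=o(h)$) so that $\|v_{i,h}^{n}-v_{i,h}^{n-1}\|\to0$ and hence $\theta\ge\frac{1}{2}$. Once $\theta\ge\frac{1}{2}$ is available, bounding $\|\nabla v_{i,h}^{n-\frac{1}{2}}\|^2\le\frac{1}{2a_0}(\|\sqrt{\mathbf{a}}\nabla v_{i,h}^{n}\|^2+\|\sqrt{\mathbf{a}}\nabla v_{i,h}^{n-1}\|^2)$ turns the identity into the recursion $\|\sqrt{\mathbf{a}}\nabla v_{i,h}^{n}\|^2-\|\sqrt{\mathbf{a}}\nabla v_{i,h}^{n-1}\|^2\le Q\tau\big(1+\|\sqrt{\mathbf{a}}\nabla v_{i,h}^{n}\|^2+\|\sqrt{\mathbf{a}}\nabla v_{i,h}^{n-1}\|^2\big)$, and the discrete Gronwall inequality together with $\|\nabla v_{i,h}^{0}\|\le Q$ and Assumption~A yields $\sum_{i=1}^k\|\nabla v_{i,h}^{n}\|\le Q$.

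Finally I would bound $\nabla u_h^n$ by reproducing the derivation of (\ref{Bu}) with $\chi_1=u_h^{n}-u_h^{n-1}$, using the just-obtained bound on $\|\nabla v_{i,h}^{n-1}\|$ and the $L^2$ bound on $u_h^n$. The advection and reaction terms now sit inside $\widetilde{\mathcal{F}}(u_h^{n-1})$ and are evaluated at the \emph{previous} step, so $(\mathbf{b}\cdot\nabla u_h^{n-1},u_h^{n}-u_h^{n-1})$ only involves the already-controlled gradient $\|\nabla u_h^{n-1}\|$; after multiplying the equation by $\tau$ it is dominated by $\frac{\beta^{-1}}{2}\|u_h^{n}-u_h^{n-1}\|^2+Q\tau^2\|\nabla u_h^{n-1}\|^2$ and feeds harmlessly into the discrete Gronwall step. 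This reproduces $\|\sqrt{\mathbf{a}}\nabla u_h^{n}\|^2-\|\sqrt{\mathbf{a}}\nabla u_h^{n-1}\|^2\le Q\tau\|\sqrt{\mathbf{a}}\nabla u_h^{n}\|^2+Q\tau$, whence summation and the Gronwall inequality close the estimate. In contrast to the $v_i$-bound, this part needs no mesh condition, precisely because the advection enters explicitly at the old time level.
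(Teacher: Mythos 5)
Your overall strategy is the right one, and the $u$-part at the end is essentially the paper's argument. The issue is in the $v_i$-part: you correctly identify the advection remainder $Q\tau^2\|\nabla v_{i,h}^{n-\frac12}\|^2$ as the crux, but you resolve it with the inverse estimate $\|\nabla v_{i,h}^{n-\frac12}\|\leq Qh^{-1}$, which forces the coupling $\tau=o(h)$ in order to recover $\|v_{i,h}^{n}-v_{i,h}^{n-1}\|\to 0$ and hence $\theta\geq\frac12$. Theorem~\ref{stable2} as stated carries no such coupling (only ``$\tau$ and $h$ small enough''), and for $d=1,2$ your condition is not implied by the $\tau=o(h^{d/2})$ assumed later in the error theorem, so your argument proves a strictly weaker statement than the one claimed.

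The paper avoids the mesh-ratio condition by a small but decisive reorganization: instead of running the sign dichotomy on $\|\sqrt{\mathbf{a}}\nabla v_{i,h}^{n}\|^2-\|\sqrt{\mathbf{a}}\nabla v_{i,h}^{n-1}\|^2$ and then having to control the leftover $\tau^2\|\nabla v_{i,h}^{n-\frac12}\|^2$, it folds that leftover into the quantity being tested. After Young's inequality the estimate is written as $\frac{\gamma^{-1}}{2}\|v_{i,h}^{n}-v_{i,h}^{n-1}\|^2\leq -\tau\theta\, d+Q\tau^2$ with the \emph{perturbed} energy difference
\begin{equation*}
d:=\|\sqrt{\mathbf{a}}\nabla v_{i,h}^{n}\|^2-\tau\gamma\|\mathbf{b}\|_{L^\infty}^2\|\nabla v_{i,h}^{n}\|^2-\big(\|\sqrt{\mathbf{a}}\nabla v_{i,h}^{n-1}\|^2+\tau\gamma\|\mathbf{b}\|_{L^\infty}^2\|\nabla v_{i,h}^{n-1}\|^2\big).
\end{equation*}
The dichotomy on the sign of $d$ (using only $\theta\geq 0$ from (\ref{jy15})) then gives $\|v_{i,h}^{n}-v_{i,h}^{n-1}\|\leq Q\tau$ when $d>0$, hence $\theta\geq\frac12$ and $d\leq Q\tau$ in all cases, with no inverse estimate. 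Unfolding $d$ and using $\|\nabla v\|^2\leq a_0^{-1}\|\sqrt{\mathbf{a}}\nabla v\|^2$ yields the recursion $\|\sqrt{\mathbf{a}}\nabla v_{i,h}^{n}\|^2\leq\|\sqrt{\mathbf{a}}\nabla v_{i,h}^{n-1}\|^2+\frac{\tau\gamma\|\mathbf{b}\|_{L^\infty}^2}{a_0}(\|\sqrt{\mathbf{a}}\nabla v_{i,h}^{n-1}\|^2+\|\sqrt{\mathbf{a}}\nabla v_{i,h}^{n}\|^2)+Q\tau$, and Gronwall closes the bound unconditionally. To match the theorem as stated, you should replace your inverse-estimate step with this absorption of the advection term into $d$.
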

\begin{proof}
	We take $\chi_2= v_{i,h}^{n}-v_{i,h}^{n-1}$ in the second equation of the AUX* scheme and apply Theorem \ref{orthR} and 
\begin{equation*}
	\begin{aligned}
&\tau\big|\big(\widetilde{\mathcal G}^n v_{i, h}^{n-\frac{1}{2}}, v_{i, h}^{n}-v_{i, h}^{n-1}\big)\big|\leq \tau\|\mathbf{b} \|_{L^\infty}\| \nabla v_{i, h}^{n-\frac{1}{2}}\|\cdot \|v_{i,h}^{n}-v_{i,h}^{n-1}\|+Q\tau\|v_{i,h}^{n}-v_{i,h}^{n-1}\| \\[0.05in]
&\qquad\quad\leq \frac{\tau^2\|\mathbf{b}\|_{L^\infty}^2\gamma}{2}(\|\nabla v_{i,h}^{n-1}\|^2+\|\nabla v_{i,h}^n\|^2)+\frac{3\gamma^{-1}}{4}\|v_{i, h}^{n}-v_{i, h}^{n-1}\|^2+Q\tau^2	
\end{aligned}
\end{equation*}
	 to get
\begin{equation*}
	\begin{aligned}
& \ds 2\gamma^{-1}\|v^{n}_{i, h}-v^{n-1}_{i, h}\|^2 =\tau (v_{i, h}^{n-\frac{1}{2}}, v_{i,h}^{n})(\|\sqrt{\mathbf{a}}\nabla v_{i, h}^{n-1}\|^2-\|\sqrt{\mathbf{a}}\nabla v_{i, h}^{n}\|^2 ) \\[0.05in]
&\quad + 2\tau (v_{i, h}^{n-\frac{1}{2}}, v_{i,h}^{n})\big(\widetilde{\mathcal G}^n v_{i, h}^{n-\frac{1}{2}}, v_{i, h}^{n}-v_{i, h}^{n-1}
\big)\\
&\leq \tau(v_{i, h}^{n-\frac{1}{2}}, v_{i,h}^{n})(\|\sqrt{\mathbf{a}}\nabla v_{i, h}^{n-1}\|^2-\|\sqrt{\mathbf{a}}\nabla v_{i, h}^{n}\|^2 )+\frac{3\gamma^{-1}}{2}(v_{i, h}^{n-\frac{1}{2}}, v_{i,h}^{n})\|v_{i, h}^{n}-v_{i, h}^{n-1}\|^2\\
&\quad+(v_{i, h}^{n-\frac{1}{2}}, v_{i,h}^{n})\Big[\tau^2\|\mathbf{b}\|_{L^\infty}^2\gamma(\|\nabla v_{i,h}^{n-1}\|^2+\|\nabla v_{i,h}^n\|^2)\Big]+Q\tau^2\\
&\leq -\tau(v_{i, h}^{n-\frac{1}{2}}, v_{i,h}^{n})d+ \frac{3\gamma^{-1}}{2}\|v^{n}_{i, h}\!-v^{n-1}_{i, h}\|^2+Q\tau^2,
\end{aligned}
\end{equation*}
that is,
$
\frac{\gamma^{-1}}{2}\|v^{n}_{i, h}-v^{n-1}_{i, h}\|^2 \leq -\tau(v_{i, h}^{n-\frac{1}{2}}, v_{i,h}^{n})d+Q\tau^2,
$
where 
$$d:=\|\sqrt{\mathbf{a}}\nabla v_{i, h}^{n}\|^2-\tau\gamma\|\mathbf{b}\|_{L^\infty}^2\| \nabla v_{i, h}^{n}\|^2-(\|\sqrt{\mathbf{a}}\nabla v_{i, h}^{n-1}\|^2+\tau\gamma\|\mathbf{b}\|_{L^\infty}^2\|\nabla v_{i, h}^{n-1}\|^2).$$
 Then a similar estimate as those around (\ref{jy15})--(\ref{Zjy1}) gives $d\leq Q\tau$, and
 we invoke this, the definition of $d$ and (\ref{zjy15}) to obtain
$$\|\sqrt{\mathbf{a}}\nabla v_{i, h}^{n}\|^2\leq  \|\sqrt{\mathbf{a}}\nabla v_{i, h}^{n-1}\|^2+\frac{\tau\gamma\|\mathbf{b}\|_{L^\infty}^2}{a_0}(\|\sqrt{\mathbf{a}}\nabla v_{i, h}^{n-1}\|^2+\|\sqrt{\mathbf{a}}\nabla  v_{i, h}^{n}\|^2)+ Q\tau. $$
Sum this equation from $n = 1$ to $n^* \leq N$ and apply the Gronwall inequality and \textit{Assumption A} to get 
$a_0\|\nabla v_{i,h}^{n^*}\|^2\leq \|\sqrt{\mathbf{a}}\nabla v_{i,h}^{n^*}\|^2 \leq Q(1+\|\sqrt{\mathbf{a}}\nabla v_{i,h}^{0}\|^2)\leq Q.$
The estimate of $\nabla u_h^{n}$ follows a similar  process around (\ref{Bu}) and we thus omit the proof.
\end{proof}

With the assistance of Theorem \ref{stable2}, we follow the same procedure as Section \ref{er} to derive error estimate.
\begin{theorem}
Under Assumption A,  (\ref{unless2}) for $1\leq n\leq N$ and the regularity condition $u,v_i\in H^1(0,T;H^2)\cap H^2(0,T;L^2)$ for $1\leq i\leq k$ and the time-step condition $\tau=o(h^{d/2})$, the  error estimate $\|u(t_n)-u_h^n\|+\sum_{i=1}^k\|v_i(t_n)-v_{i,h}^n\|\leq Q(\tau+h^2)$  of (\ref{Rapp})  holds for for $1\leq n\leq N$ and $h$ and $\tau$ small enough.
\end{theorem}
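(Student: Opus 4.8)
The plan is to follow the two-stage strategy of Section~\ref{er}. First I would establish the $O(\tau+h^2)$ estimate for the AUX* scheme, for which the gradient bounds of Theorem~\ref{stable2} and the orthonormality of Theorem~\ref{orthR} are already available, and then transfer it to the original scheme~(\ref{Rapp}) by a bootstrapping argument. To begin, I would form the backward-Euler time-discretized reference system for~(\ref{RSD}) (the analog of~(\ref{ref})), subtract the AUX* scheme, and obtain error equations of the same shape as~(\ref{error}) for $e_u^n:=u(t_n)-u_h^n$ and $e_{v_i}^n:=v_i(t_n)-v_{i,h}^n$, now carrying additional contributions from the advection operator $\mathbf b\cdot\nabla$ and the reaction term $c(x)(\cdot)$ hidden inside $\widetilde{\mathcal F}$ and $\widetilde{\mathcal G}^n$. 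I would then split each error through the elliptic projection, $e_u^n=\eta_u^n+\xi_u^n$ and $e_{v_i}^n=\eta_{v_i}^n+\xi_{v_i}^n$, so that the diffusion terms acting on $\eta$ vanish by the defining property of $P$ while~(\ref{eta1}) controls $\|\eta\|$ and $\|\nabla\eta\|$.

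Next I would reproduce, with the same structure, the two energy arguments. Testing the $u$-error equation with $\chi_1=\xi_u^n$ yields the analog of~(\ref{u1}) and hence of the recursion~(\ref{xiu}), while testing the $v_i$-error equation with $\chi_2=\xi_{v_i}^{n-\frac12}$ and summing over $i$ gives the analog of~(\ref{xiv}). The reaction contributions, being bounded by $Q\|c\|_{L^\infty}\|e\|\,\|\xi\|$, are absorbed immediately, since they share the form of the lower-order terms already present in~(\ref{u1}) and~(\ref{v2}). Combining the two recursions, invoking the standard truncation estimates and the initial-value estimate~(\ref{jy8}), and applying the Gronwall inequality for $\tau$ small enough would then deliver $\sum_i\|\xi_{v_i}^{n}\|^2\le Q\tau^2+Qh^4$ and $\|\xi_u^n\|\le Q(\tau+h^2)$, which together with~(\ref{eta1}) proves the $O(\tau+h^2)$ estimate for the AUX* scheme.

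The hard part will be the advection term, the only genuinely new ingredient relative to Section~\ref{er}. After the projection splitting, the difference of advection contributions produces a term of the form $(\mathbf b\cdot\nabla\eta,\xi)$ in which the gradient falls on the projection error; a direct bound $\|\mathbf b\|_{L^\infty}\|\nabla\eta\|\,\|\xi\|\le Qh\|\xi\|$ is only $O(h)$ and would destroy the optimal spatial rate. To recover second order I would integrate by parts, moving the derivative onto the test function, $(\mathbf b\cdot\nabla\eta,\xi)=-((\nabla\cdot\mathbf b)\eta,\xi)-(\eta,\mathbf b\cdot\nabla\xi)$ using $\xi\in H^1_0$. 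The first term is $O(h^2)\|\xi\|$ by~(\ref{eta1}), and the second, bounded by $Q\|\eta\|\,\|\nabla\xi\|\le Qh^2\|\nabla\xi\|$, is absorbed into the diffusion term $\tau\|\sqrt{\mathbf a}\nabla\xi\|^2$ furnished by the coercivity in \textit{Assumption A}, leaving an $O(h^4)$ remainder. The remaining piece $(\mathbf b\cdot\nabla\xi,\xi)$ carries its own gradient, which is likewise dominated by the diffusion term after Young's inequality and contributes only a $Q\|\xi\|^2$ term to the Gronwall step; here the gradient stability of Theorem~\ref{stable2} keeps all constants uniform. This step draws on the boundedness of $\nabla\cdot\mathbf b$, i.e.\ the regularity $\mathbf b\in W^{1,\infty}$, which I would make explicit.

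Finally I would transfer the estimate to the original scheme~(\ref{Rapp}) exactly as in the proof of Theorem~\ref{thm}: combining the interpolation and inverse estimates with the AUX* bound and the time-step restriction $\tau=o(h^{d/2})$, I would obtain $\|\tilde u_h^n\|_{L^\infty}\le\|u\|_{L^\infty(0,T;L^\infty)}+1$ for $h$ small enough, so that the cutoffs $\tilde f$ and $\widetilde{f'}$ coincide with $f$ and $f'$ and the AUX* scheme degenerates to~(\ref{Rapp}), which completes the proof.
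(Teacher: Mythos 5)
Your proposal is correct and follows essentially the same route as the paper, whose proof of this theorem is in fact only the one-line remark that one repeats the procedure of Section~\ref{er} with the gradient bounds of Theorem~\ref{stable2}; your two-stage plan (optimal estimate for the AUX* scheme via the projection splitting, the two energy recursions and Gronwall, then the inverse-estimate bootstrap under $\tau=o(h^{d/2})$ to remove the cutoff) is exactly that procedure. Your integration-by-parts treatment of the advection contribution $(\mathbf b\cdot\nabla\eta,\xi)$ is a genuine detail the paper leaves implicit, and you are right that it quietly requires $\mathbf b\in W^{1,\infty}$ (or an advection-inclusive Ritz projection) rather than the mere $L^\infty$ bound stated before (\ref{RSD}).
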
 

Based on the error estimate, one could follow a similar proof as Corollary \ref{cor1} to conclude that, under similar conditions of Corollary \ref{cor1}, the scheme (\ref{Rapp}) preserves the indices of saddle solutions. 

\section{Numerical experiments}\label{sec7} We present numerical examples to show the performance of the scheme (\ref{app}) (or (\ref{Rapp})) and substantiate the theoretical results. The uniform partitions are used for both the time period $[0,T]$  with $T=5$ and space domain $(0,\pi)^d$ with time step size $\tau=T/N$ and spatial mesh size $h=\pi/M$ for some integers $N$, $M>0$. The errors are measured by $\text{Err}(u):=\max\limits_{1\leq n\leq N}\|u(t_n)-u_h^n\|$ and $\text{Err}(v_i):=\max\limits_{1\leq n\leq N}\|v_i(t_n)-v_{i,h}^n\|.$
 For $d=1$, we define the vector $F(u_h(T))\in\mathbb R^{M-1}$ as $F(u_h(T))_i:=(\mathcal L_{h}u_h(T))(x_i)+ f(u_h(x_i,T))$ for $1\leq i\leq M-1$ where $\mathcal L_{h}$ denotes the approximation of $\mathcal L$ determined from the finite element scheme and $x_i$ denotes the $i$th internal node of the spatial partition. Thus $F(u_h(T))$ is used to measure the degree of $u_h(T)$ in satisfying the equation (and thus in approximating the saddle point). Throughout the numerical experiments, we always set $\beta =\gamma =1$.

\textbf{Example 1: One-dimensional case.} We first consider model (\ref{elliptic}) with $d=1$, $a(x)=1$ and $f(u)=u^4-10u^2$.
We test the performance of the scheme (\ref{app}) with $k=3$, using parameters $\tau=10^{-3}$, $h=\pi/100$, $v_{i,0}=\sqrt{2/\pi}\sin (ix)$ ($1\leq i\leq k $), and $u_0$ in Figure \ref{fig1}(a), which show that the method successfully converges to a saddle point. The $\|F(u_h(T))\|_{l^\infty}$  is $1.41\times 10^{-6}$, indicating that the scheme (\ref{app}) successfully locates saddle point of the semilinear elliptic equation (\ref{elliptic}).

Then we consider model (\ref{RDC}) with $\mathbf{a}(x)=0.02$, $\mathbf{b}(x)=0.02\sin 2x$, $c(x)=0.5$, $ f(u)=-u^2$. We evaluate the performance of the scheme (\ref{Rapp}) with $k=3$ under the same parameters as before with initial condition $u_0$ in Figure \ref{fig1}(b). The $\|F(u_h(T))\|_{l^\infty}$ is $4.08\times 10^{-3}$, which again demonstrates that the method successfully converges to a saddle point of the semilinear advection-reaction-diffusion equation (\ref{RDC}).

\begin{figure}[h]
\vspace{-0.2in}
\centering
\subfloat[$u_0=\sin 4x$]{\label{figa1} 
    \includegraphics[width=2.1in,height=0.9in]{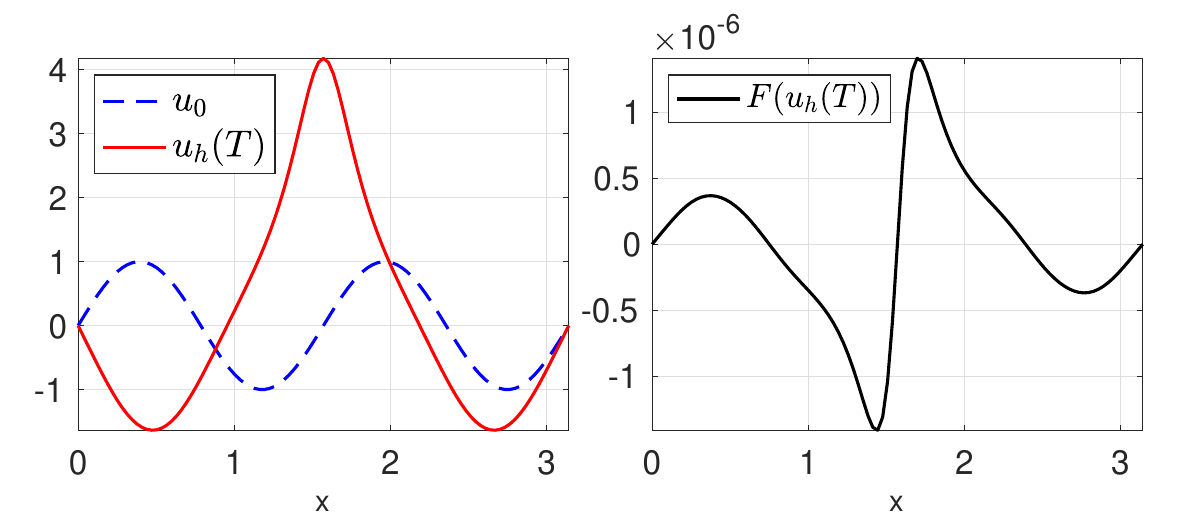}}
\hspace{0.1in}
\subfloat[$u_0=0.1x\sin x$]{\label{figa4}
    \includegraphics[width=2.1in,height=0.9in]{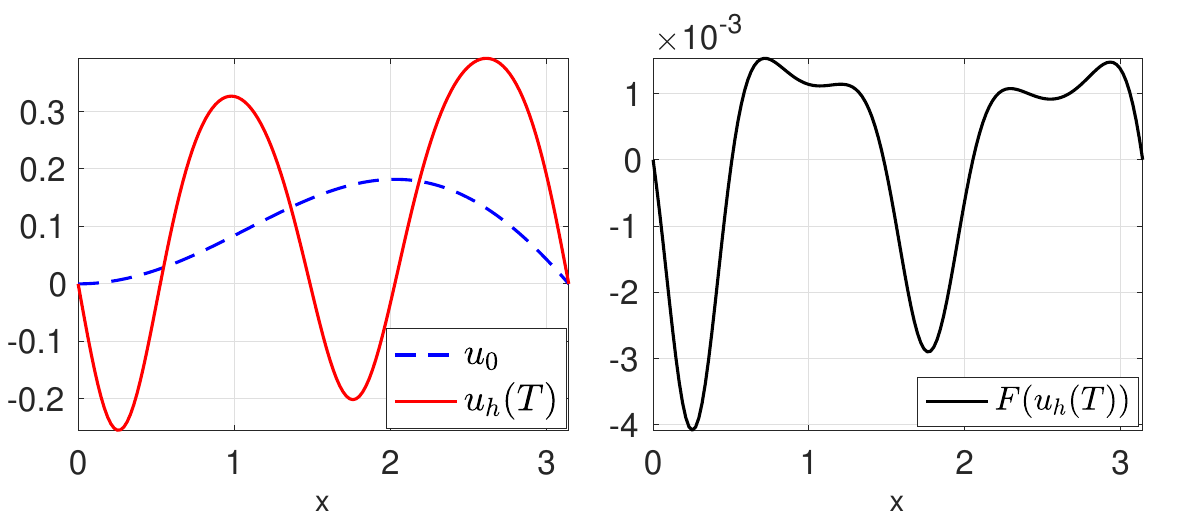}}
    
\caption{Plots of $u_h(T)$ and $F(u_h(T))$ in Example 1.}
\label{fig1}
\end{figure}
\vspace{-0.2in}
To show the index-preservation of the schemes, we compute eigenvalues of the Hessian at $u_h(T)$ for cases (a) and (b) in Figure \ref{fig1} under $\tau=10^{-3}$ and different $h$. It can be observed from Figure \ref{eig} that for $h$ not sufficiently small, the eigenvalues are not exact and their signs are even incorrect, which can not give the correct indices of saddle points. As $h$ becomes sufficiently small, the values of the eigenvalues become stable, leading to the correct indices of saddle points. These observations are consistent with Corollary \ref{cor1} and imply the index-preservation of the proposed schemes. 

\begin{figure}[h]
\vspace{-0.2in}
\centering
\captionsetup[subfloat]{labelformat=empty}
\subfloat[Case (a)]{
    \includegraphics[width=2in,height=1.2in]{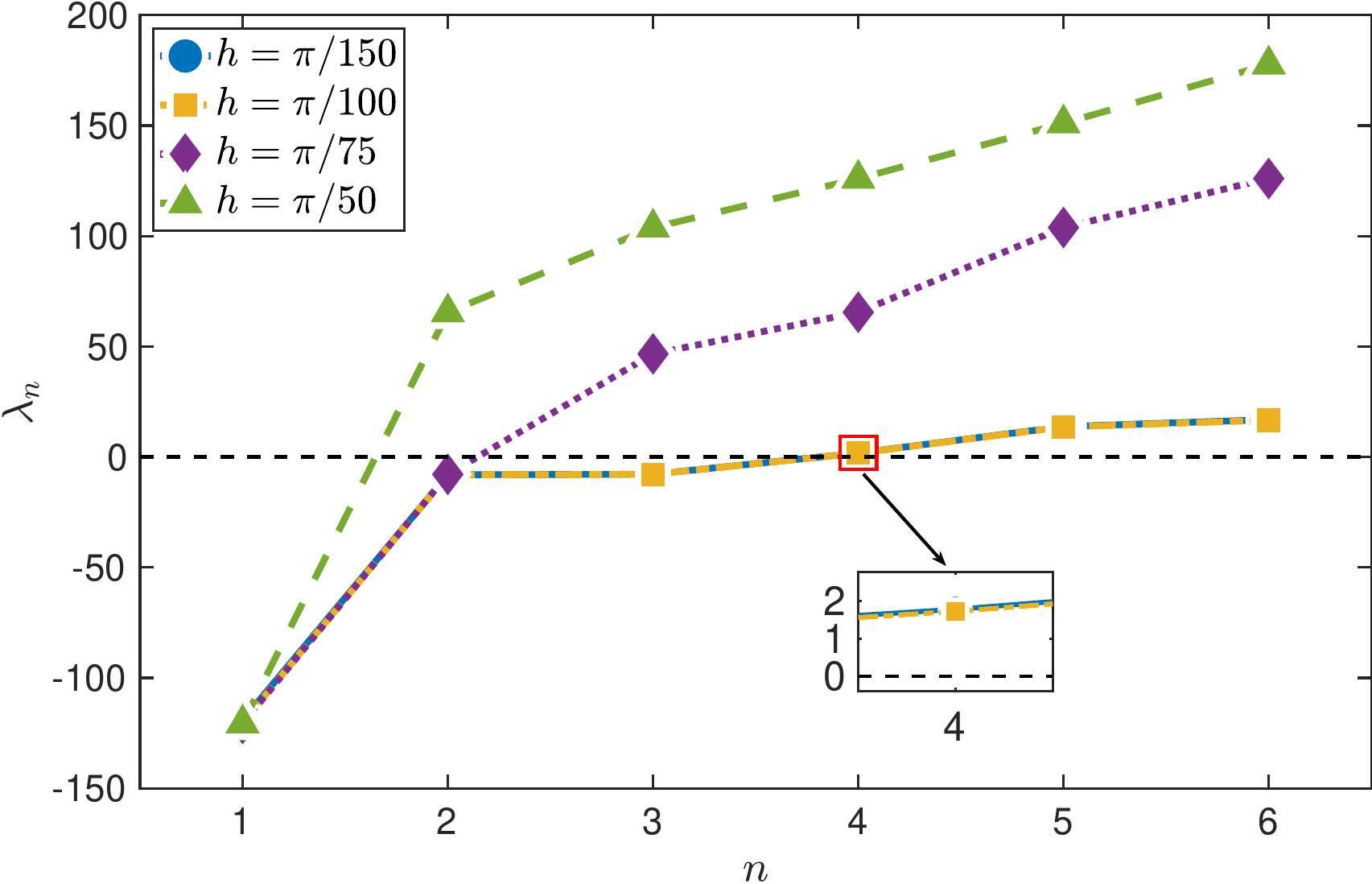}} 
    \hspace{0.1in}
\subfloat[Case (b)]{
    \includegraphics[width=2in,height=1.2in]{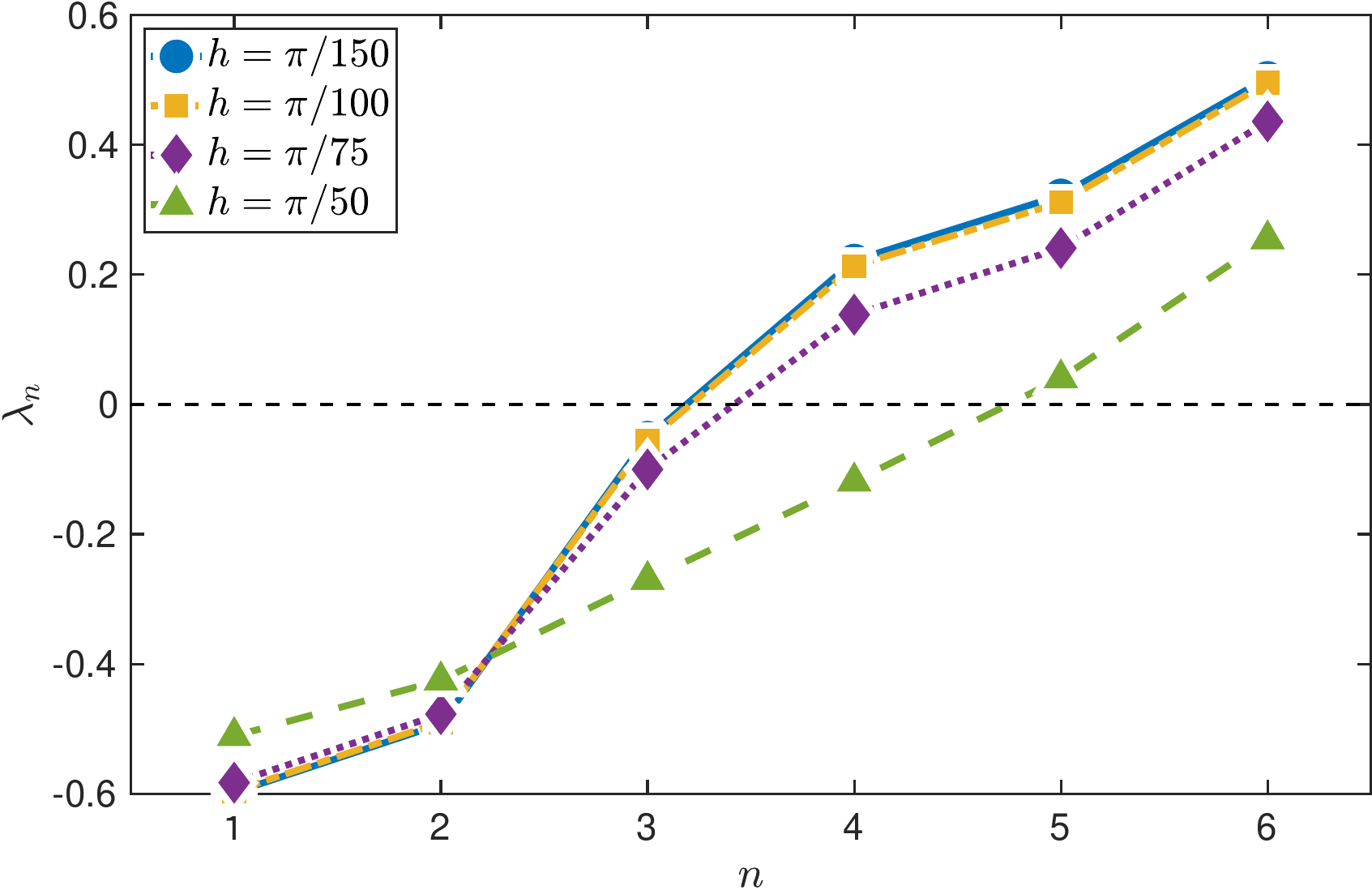}}
\caption{The first six smallest eigenvalues of the Hessian at $u_h(T)$ in Example 1.}
\label{eig}
\end{figure}

\vspace{-0.2in}

\begin{table}[h]
\caption{Accuracy tests for Case (a) in Example 1.}
\vspace{-0.1in}
\centering
\footnotesize{\begin{tabular}{ccccc cccc}
\hline
$\tau\, (h=\pi/1024)$& $\text{Err}(u)$ & rate   & $\text{Err}(v_1)$ & rate  & $\text{Err}(v_2)$ & rate   & $\text{Err}(v_3)$ & rate\\
\hline
$1.6\times 10^{-2}$  & 1.43e-03  &   --   &   2.42e-03 &   -- & 7.53e-03  &  --  &   1.39e-02 &--\\
$8\times 10^{-3}$    & 7.75e-04	 & 0.88 &   1.27e-03 & 0.93   & 4.07e-03  & 0.89 &   7.32e-03 & 0.93\\
$4\times 10^{-3}$    & 3.80e-04  & 1.03 &   6.29e-04 & 1.01   & 2.07e-03  & 0.98 &   3.72e-03 & 0.98\\ 
$2\times 10^{-3}$    & 1.92e-04  & 0.98 &   3.06e-04 & 1.04   & 1.06e-04  & 0.97 &   1.88e-03 & 0.98\\ 
\hline
$h\,(\tau =10^{-4})$ & $\text{Err}(u)$ & rate   & $\text{Err}(v_1)$ & rate  & $\text{Err}(v_2)$ & rate   & $\text{Err}(v_3)$ & rate\\
\hline
$\pi/64$     & 2.12e-02  &  --  &   1.37e-02 & --     & 1.21e-02  & --   &   1.01e-02 & --\\
$\pi/128$    & 5.53e-03	 & 1.94 &   3.59e-03 & 1.93   & 3.12e-03  & 1.96 &   2.58e-03 & 1.97\\
$\pi/256$    & 1.40e-04  & 1.99 &   9.10e-04 & 1.98   & 7.87e-04  & 1.99 &   6.50e-04 & 1.99\\ 
$\pi/512$    & 3.50e-04  & 2.00 &   2.28e-04 & 2.00   & 1.97e-04  & 2.00 &   1.63e-04 & 2.00\\ 
\hline
\end{tabular}}
\label{conv1}
\end{table}

We finally test the convergence rates of the scheme (\ref{app}) under case (a) for illustration, where the reference solution is computed with $\tau=10^{-4}$ and $h=\pi/1024$. 
Table \ref{conv1} indicates first and second order accuracy in time and space, respectively, confirming theoretical predictions.


\textbf{Example 2: Two-dimensional case.} We consider model (\ref{elliptic}) on $\Omega = (0,1)^2$ with $f(u)=u-u^3$ and $\mathbf{a} (x)=\text{diag}\{0.006,0.006\}$, which corresponds to the phase field model. Instead of computing single solutions by scheme (\ref{app}), we combine this scheme under $\tau=10^{-3}$ and $h=1/64$ with the solution landscape algorithm \cite{YinSCM} to systematically find multiple solutions. The solution landscape is depicted in Figure \ref{fig2}, where each image represents a solution to (\ref{elliptic}).

Compared with the solution landscape of the same model with periodic boundary condition (cf. \cite[Figure 3(d)]{YinSCM}),  the Dirichlet boundary condition  results in more complex solution patterns. Specifically, the solution landscape in \cite[Figure 3(d)]{YinSCM} contains 10 different solutions (for rotationally equivalent solutions or solutions that differ by a sign we only count in one of them), while the solution landscape in Figure \ref{fig2}  contains 16 different solutions. The results substantiate the effectiveness of the scheme (\ref{app}) in locating multiple solutions of semilinear elliptic problems.

\vspace{-0.1in}

\begin{figure}[h]
\centering 
\includegraphics[width=4in,height=1.3in]{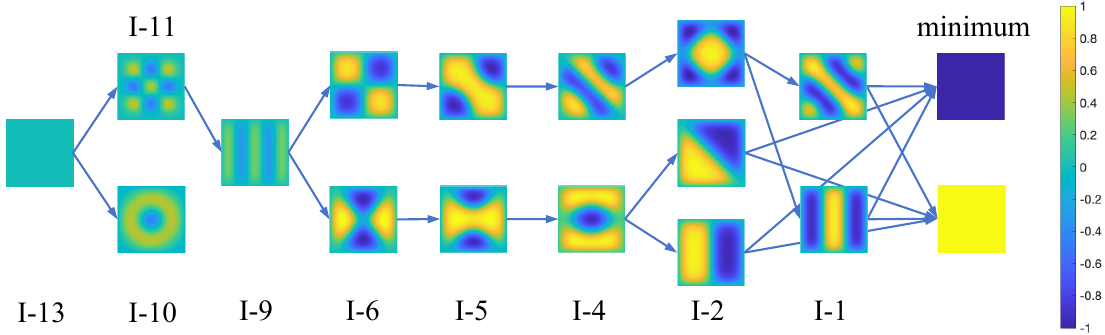} 
\caption{Solution landscape for (\ref{elliptic}) in Example 2.}
\label{fig2} 
\end{figure}

\vspace{-0.34in}

\section{Concluding remarks}
This paper presents a rigorous numerical framework for spatiotemporal HiSD, featuring a retraction-free orthonormality-preserving scheme for semilinear elliptic problems. Theoretical analysis and numerical experiments have demonstrated the scheme's stability and accuracy. 
This work lays the foundation for extending HiSD to more complex infinite-dimensional systems, including the Gross-Pitaevskii equation \cite{BaoDu}, the Kohn--Sham density functional model \cite{Dai} and the reaction-diffusion system \cite{Hao2}. 
Additionally, the flexibility of the framework allows for the integration of different spatial discretization methods to accommodate diverse PDE structures. We plan to explore these promising directions to further broaden the scope of multiple solution computations in future research.

\section*{Appendix: Auxiliary estimates}
We present truncation errors  appeared in Section \ref{er} and derive their bounds. The truncation errors  $\mu_1^n$--$\mu_3^n$  in (\ref{error}) are given as 
\begin{equation*}
\left\{
\begin{array}{l}
\ds \mu_1^n:= 2\sum_{i=1}^k(v^{n-1}_{i, h}, \chi_1)(\mathbf{a}\nabla v^{n-1}_{i, h}, \nabla u^{n}_h)- 2\sum_{i=1}^k\big(v_i(t_{n}), \chi_1)(\mathbf{a}\nabla v_i(t_{n}), \nabla u(t_{n})\big), \\[0.1in]
\ds \mu_2^n: =2\sum_{i=1}^{k}(v^{n-1}_{i, h},\chi_1)(\tilde f(u^{n-1}_h), v^{n-1}_{i, h}) - 2\sum_{i=1}^k(v_i(t_{n}),\chi_1)\big(\tilde f(u(t_{n})), v_i(t_{n})\big)
\end{array}
\right.
\end{equation*}
and  $\mu_3^n: =\big(\tilde f(u(t_n))-\tilde f(u^{n-1}_h), \chi_1\big)$. Following similar derivations in the first part of  \cite[Section 4.4]{ZhaZheZhu}, the $\mu_{1}^n$--$\mu_{3}^n$ with $\chi_1= \xi_{u}^{n}$ can be bounded as
$
\sum_{j=1}^3|\mu_j^n| \leq Q\big(\sum_{i=1}^k\|e_{v_i}^{n-1}\|+\|e_u^{n-1}\|+\|\sqrt{\mathbf{a}} \nabla \xi_u^{n}\|+\tau\big) \|\xi_u^n\|.
$

 The truncation errors   $\nu_1^n$--$\nu_4^n$ in (\ref{error}) are given as 
\begin{equation*}
\left\{
\begin{array}{l}
\ds \nu_{i,1}^n := (v_i(t_n), v_i(t_n))(\widetilde{f'}(u(t_{n}))v_i(t_{n}),\chi_2)-(v_{i, h}^{n-\frac{1}{2}}, v_{i, h}^{n-\frac{1}{2}})(\widetilde{f'}(u_h^n)v_{i, h}^{n-\frac{1}{2}}, \chi_2)\\[0.1in]
 \ds\qquad -\frac{1}{2}(\mathbf{a}\nabla v_i(t_{n})-\mathbf{a}\nabla v_i(t_{n-1}), \nabla \chi_2)-O(\tau)(\mathbf{a}\nabla v^{n-\frac{1}{2}}_{i, h}, \nabla \chi_2), \\[0.2in]
\ds \nu_{i,2}^n: = (v_i(t_{n}), \chi_2)\big[(\mathbf{a}\nabla v_i(t_{n}),\nabla v_i(t_{n}))-(\widetilde{f'}(u(t_{n}))v_i(t_{n}), v_i(t_{n}))\big]\\[0.15in]
\qquad -(v_{i, h}^{n-\frac{1}{2}}, \chi_2)\big[(\mathbf{a}\nabla v_{i, h}^{n-\frac{1}{2}}, \nabla v_{i, h}^{n-\frac{1}{2}})-(\widetilde{f'}(u_h^n)v_{i, h}^{n-\frac{1}{2}}, v_{i, h}^{n-\frac{1}{2}})\big],\\[0.2in]
\ds \nu_{i,3}^n: = -\sum_{l< i}(v_{i, h}^{n-\frac{1}{2}}, v_{i, h}^{n-\frac{1}{2}})(v_{l,h}^{n}, \chi_2)\big[(\mathbf{a}\nabla v_{i, h}^{n-\frac{1}{2}}, \nabla v_{l, h}^{n})-(\widetilde{f'}(u_h^n)v_{i, h}^{n-\frac{1}{2}}, v_{l, h}^{n})\big]\\[0.15in]
\ds +\sum_{l<i}(v_i(t_n), v_i(t_n))(v_l(t_{n}), \chi_2)\big[(\mathbf{a}\nabla v_i(t_{n}),\nabla v_l(t_{n}))-(\widetilde{f'}(u(t_{n}))v_i(t_{n}), v_l(t_{n}))\big],\\[0.2in]
\ds \nu_{i,4}^n:=-\sum_{l> i}(v_{i, h}^{n-\frac{1}{2}}, v_{i, h}^{n-\frac{1}{2}})(v_{l,h}^{n-1}, \chi_2)\big[(\mathbf{a}\nabla v_{i, h}^{n-\frac{1}{2}}, \nabla v_{l, h}^{n-1})-(\widetilde{f'}(u_h^n)v_{i, h}^{n-\frac{1}{2}}, v_{l, h}^{n-1})\big]\\[0.15in]
 \ds + \sum_{l>i}(v_i(t_n), v_i(t_n))(v_l(t_{n}), \chi_2)\big[(\mathbf{a}\nabla v_i(t_{n}),\nabla v_l(t_{n}))-(\widetilde{f'}(u(t_{n}))v_i(t_{n}), v_l(t_{n}))\big].
\end{array}
\right.
\end{equation*}


We then bound $\nu_{i,1}^n$--$\nu_{i,4}^n$ with $\chi_2= \xi_{v_i}^{n-\frac{1}{2}}$. By (\ref{zjy20}), direct splittings and calculations give the estimate of $\nu_{i,1}^n$
\begin{align*}
| \nu_{i,1}^n | & = \big|(\widetilde{f'}(u(t_{n}))v_i(t_{n}),\xi_{v_i}^{n-\frac{1}{2}})-(1+O(\tau))(\widetilde{f'}(u_h^n)v_{i, h}^{n-\frac{1}{2}}, \xi_{v_i}^{n-\frac{1}{2}})\\
 &\qquad -\frac{1}{2}(\mathbf{a} \nabla v_i(t_{n})-\mathbf{a}\nabla v_i(t_{n-1}), \nabla \xi_{v_i}^{n-\frac{1}{2}})-O(\tau)(\mathbf{a}\nabla v^{n-\frac{1}{2}}_{i, h}, \nabla \xi_{v_i}^{n-\frac{1}{2}}) \big| \\
 & = \big| \big((\widetilde{f'}(u(t_{n}))-\widetilde{f'}(u_h^n))v_i(t_{n}),\xi_{v_i}^{n-\frac{1}{2}}\big) + \frac{1}{2}\big (\widetilde{f'}(u_h^n)(v_i(t_{n})+ v_i(t_{n-1})),\xi_{v_i}^{n-\frac{1}{2}}\big) \\
 &\quad  -(1+O(\tau))(\widetilde{f'}(u_h^n)v_{i, h}^{n-\frac{1}{2}}, \xi_{v_i}^{n-\frac{1}{2}})+ \frac{1}{2}(\widetilde{f'}(u_h^n)(v_i(t_{n})- v_i(t_{n-1})),\xi_{v_i}^{n-\frac{1}{2}}\big)\\
 &\quad -\frac{1}{2}(\mathbf{a}\nabla v_i(t_{n})-\mathbf{a}\nabla v_i(t_{n-1}), \nabla \xi_{v_i}^{n-\frac{1}{2}})-O(\tau)(\mathbf{a}\nabla v^{n-\frac{1}{2}}_{i, h}, \nabla \xi_{v_i}^{n-\frac{1}{2}}) \big|\\
 &\leq Q \|\xi_{v_i}^{n-\frac{1}{2}}\|\cdot (\|e_u^n\|+  \|e_{v_i}^n\|+  \|e_{v_i}^{n-1}\|) +Q\tau\|\xi_{v_i}^{n-\frac{1}{2}}\|+Q\tau \|\sqrt{\mathbf{a}}\nabla \xi_{v_i}^{n-\frac{1}{2}}\|,
	\end{align*}
where we have used $\|\sqrt{\mathbf{a}}(\nabla v_i(t_{n})-\nabla v_i(t_{n-1}))\|\leq Q\tau\|(\nabla v_i)_t\|_{C([0,T];L^2)}$.
We then split $\nu_{i,2}^n$ as $ A_1+A_2+A_3$ where
	\begin{align*}
A_1 & = \frac{1}{2} \Big\{(v_i(t_{n}), \xi_{v_i}^{n-\frac{1}{2}})\big[(\mathbf{a}\nabla v_i(t_{n}),\nabla v_i(t_{n}))-(\widetilde{f'}(u(t_{n}))v_i(t_{n}), v_i(t_{n}))\big]\\
&\qquad -(v_{i, h}^{n}, \xi_{v_i}^{n-\frac{1}{2}})\big[(\mathbf{a}\nabla v_{i, h}^{n-\frac{1}{2}}, \nabla v_{i, h}^{n-\frac{1}{2}})-(\widetilde{f'}(u_h^n)v_{i, h}^{n-\frac{1}{2}}, v_{i, h}^{n-\frac{1}{2}})\big]\Big\},\\
A_2&=\frac{1}{2}\Big\{(v_i(t_{n-1}), \xi_{v_i}^{n-\frac{1}{2}})\big[(\mathbf{a}\nabla v_i(t_{n}),\nabla v_i(t_{n}))-(\widetilde{f'}(u(t_{n}))v_i(t_{n}), v_i(t_{n}))\big]\\
&\qquad -(v_{i, h}^{n-1}, \xi_{v_i}^{n-\frac{1}{2}})\big[(\mathbf{a}\nabla v_{i, h}^{n-\frac{1}{2}}, \nabla v_{i, h}^{n-\frac{1}{2}})-(\widetilde{f'}(u_h^n)v_{i, h}^{n-\frac{1}{2}}, v_{i, h}^{n-\frac{1}{2}})\big]\Big\},\\
A_3&= \frac{1}{2}(v_i(t_{n})-v_i(t_{n-1}), \xi_{v_i}^{n-\frac{1}{2}})\big[(\mathbf{a}\nabla v_i(t_{n}),\nabla v_i(t_{n}))-(\widetilde{f'}(u(t_{n}))v_i(t_{n}), v_i(t_{n}))\big].
\end{align*}
We bound $A_1$ for demonstration, and $A_2$ and $A_3$ could be bounded similarly and thus we omit the details. For the sake of analysis, we further rewrite $A_1$ as 
\begin{align*}
A_1 &= \frac{1}{4}\Big\{(v_i(t_{n}), \xi_{v_i}^{n-\frac{1}{2}})\big[(\mathbf{a}\nabla v_i(t_{n}),\nabla v_i(t_{n}))-(\widetilde{f'}(u(t_{n}))v_i(t_{n}), v_i(t_{n}))\big]\\
&\qquad -(v_{i, h}^{n}, \xi_{v_i}^{n-\frac{1}{2}})\big[(\mathbf{a}\nabla v_{i, h}^{n}, \nabla v_{i, h}^{n-\frac{1}{2}})-(\widetilde{f'}(u_h^n)v_{i, h}^{n}, v_{i, h}^{n-\frac{1}{2}})\big]\Big\}\\		
&\quad +\frac{1}{4}\Big\{(v_i(t_{n}), \xi_{v_i}^{n-\frac{1}{2}})\big[(\mathbf{a}\nabla v_i(t_{n-1}),\nabla v_i(t_{n}))-(\widetilde{f'}(u(t_{n}))v_i(t_{n-1}), v_i(t_{n}))\big]\\
&\qquad -(v_{i, h}^{n}, \xi_{v_i}^{n-\frac{1}{2}})\big[(\mathbf{a}\nabla v_{i, h}^{n-1}, \nabla v_{i, h}^{n-\frac{1}{2}})-(\widetilde{f'}(u_h^n)v_{i, h}^{n-1}, v_{i, h}^{n-\frac{1}{2}})\big]\Big\}\\
&\quad + \frac{1}{4}\Big\{(v_i(t_{n}), \xi_{v_i}^{n-\frac{1}{2}})(\mathbf{a}(\nabla v_i(t_{n})-\nabla v_i(t_{n-1})),\nabla v_i(t_{n}))\\
&\qquad -(v_i(t_{n}), \xi_{v_i}^{n-\frac{1}{2}})(\widetilde{f'}(u(t_{n}))(v_i(t_{n})-v_i(t_{n-1}), v_i(t_{n}))\Big\}=:B_1+B_2+B_3.	
\end{align*}
We first bound $B_1$ as follows 
	\begin{align}
|B_1| & =\frac{1}{8} \big|(v_i(t_{n}), \xi_{v_i}^{n-\frac{1}{2}})(\mathbf{a}\nabla v_i(t_{n}),\nabla v_i(t_{n})+\nabla v_i(t_{n-1}))\nonumber\\
& \qquad -(v_{i, h}^{n}, \xi_{v_i}^{n-\frac{1}{2}})(\mathbf{a}\nabla v_{i, h}^{n}, \nabla v_{i, h}^{n}+\nabla v_{i, h}^{n-1})\nonumber\\
& \qquad -(v_i(t_{n}), \xi_{v_i}^{n-\frac{1}{2}})(\widetilde{f'}(u(t_{n}))v_i(t_{n}), v_i(t_{n})+v_i(t_{n-1}))	\nonumber\\
&\qquad + (v_{i, h}^{n}, \xi_{v_i}^{n-\frac{1}{2}})(\widetilde{f'}(u_h^n)v_{i, h}^{n}, v_{i, h}^{n}+v_{i, h}^{n-1})\label{short1}\\
&\qquad +(v_i(t_{n}), \xi_{v_i}^{n-\frac{1}{2}})\big[(\mathbf{a}\nabla v_i(t_{n}),\nabla v_i(t_{n})-\nabla v_i(t_{n-1}))\nonumber\\
&\qquad -(v_i(t_{n}), \xi_{v_i}^{n-\frac{1}{2}})(\widetilde{f'}(u(t_{n}))v_i(t_{n}), v_i(t_{n})-v_i(t_{n-1}))\big|\nonumber\\
&\leq Q\big(\|e_{v_i}^{n}\|+ \|e_{v_i}^{n-1}\|+  \|\sqrt{\mathbf{a}}\nabla \xi_{v_i}^{n-\frac{1}{2}}\| + \|e_{u}^{n}\|+\tau\big)\|\xi_{v_i}^{n-\frac{1}{2}}\|,  \nonumber
\end{align}
where we have used the integration by parts 
\begin{equation*}
	\begin{aligned}
	&(\mathbf{a}\nabla v_i(t_{n}),\nabla v_i(t_{n})+\nabla v_i(t_{n-1}))-(\mathbf{a}\nabla v_{i, h}^{n}, \nabla v_{i, h}^{n}+\nabla v_{i, h}^{n-1})\\
	&\quad = (\nabla v_i(t_{n})-\nabla v_{i, h}^{n}, \mathbf{a}\nabla v_i(t_{n})+\mathbf{a}\nabla v_i(t_{n-1}))\\
	&\qquad +(\mathbf{a}\nabla v_{i, h}^{n}, \nabla v_i(t_{n})-\nabla v_{i, h}^{n}+\nabla v_i(t_{n-1})-\nabla v_{i, h}^{n-1})\\
	&\quad =- ( e_{v_i}^n, \nabla\cdot (\mathbf{a}\nabla v_i(t_{n}))+\nabla\cdot (\mathbf{a}\nabla v_i(t_{n-1})))+ (\mathbf{a}\nabla v_{i, h}^{n}, \nabla \xi_{v_i}^n+\nabla \xi_{v_i}^{n-1}).
	\end{aligned}
\end{equation*}
We then similarly bound $B_2$ and $B_3$ by the right-hand side of (\ref{short1}).
We summarize the above estimates to  bound  $\nu^n_{i,2}$ by the right-hand side of (\ref{short1}).

By (\ref{zjy20}), the $\nu_{i,3}^n$ could be split as 
\begin{equation*}
	\begin{aligned}
\ds  \nu_{i,3}^n  
& = \sum_{l<i} (v_l(t_{n}), \xi_{v_i}^{n-\frac{1}{2}})\big[(\mathbf{a}\nabla v_i(t_{n}),\nabla v_l(t_{n}))-(\widetilde{f'}(u(t_{n}))v_i(t_{n}), v_l(t_{n}))\big]\\
\ds & \qquad -\sum_{l<i}(v_{l,h}^{n}, \xi_{v_i}^{n-\frac{1}{2}})\big[(\mathbf{a}\nabla v_{i, h}^{n-\frac{1}{2}}, \nabla v_{l, h}^{n})-(\widetilde{f'}(u_h^n)v_{i, h}^{n-\frac{1}{2}}, v_{l, h}^{n})\big] \\
&\qquad +O(\tau)\sum_{l<i}(v_{l,h}^{n}, \xi_{v_i}^{n-\frac{1}{2}})\big[(\mathbf{a}\nabla v_{i, h}^{n-\frac{1}{2}}\nabla v_{i, h}^{n-\frac{1}{2}}, \nabla v_{l, h}^{n})-(\widetilde{f'}(u_h^n)v_{i, h}^{n-\frac{1}{2}}, v_{l, h}^{n})\big],
\end{aligned}
\end{equation*}
and we could bound $\nu_{i,3}^n$ in a similar manner as the estimate of  $\nu_{i,2}^n$ to get
\begin{equation*}
|\nu_{i,3}^n |  \leq Q \Big(\sum_{l\leq i}\|e_{v_l}^{n} \| + \|e_{v_i}^{n-1} \|+ \|e_u^n\|  +  \|\sqrt{\mathbf{a}}\nabla \xi_{v_i}^{n-\frac{1}{2}} \|+\tau\Big)\|\xi_{v_i}^{n-\frac{1}{2}}\|.
\end{equation*}
By similar splittings, the $\nu_{i,4}^n$ could be bounded by the right-hand side of the above equation with the first two terms replaced by  $Q \sum_{l \geq  i} \|e_{v_l}^{n-1}\|\cdot \|\xi_{v_i}^{n-\frac{1}{2}}\|+Q \|e_{v_i}^{n} \|\cdot \|\xi_{v_i}^{n-\frac{1}{2}}\|$.

\end{document}